\pdfoutput=1
\RequirePackage{ifpdf}
\ifpdf % We are running pdfTeX in pdf mode
\documentclass[pdftex]{sigma}
\else
\documentclass{sigma}
\fi

\numberwithin{equation}{section}

\newtheorem{Theorem}{Theorem}[section]
\newtheorem{Proposition}[Theorem]{Proposition}
\newtheorem{Assertion}[Theorem]{Assertion}
{ \theoremstyle{definition}
\newtheorem{Question}[Theorem]{Question}}

\newcommand{\Aaux}{{\text{\usefont{T1}{qcs}{m}{sl}A}}}
\newcommand{\Baux}{{\text{\usefont{T1}{qcs}{m}{sl}B}}}
\newcommand{\Caux}{{\text{\usefont{T1}{qcs}{m}{sl}C}}}
\newcommand{\Eaux}{{\text{\usefont{T1}{qcs}{m}{sl}E}}}
\newcommand{\Kaux}{{\text{\usefont{T1}{qcs}{m}{sl}K}}}
\newcommand{\Maux}{{\text{\usefont{T1}{qcs}{m}{sl}M}}}
\newcommand{\Waux}{{\text{\usefont{T1}{qcs}{m}{sl}W}}}

\newcommand{\Dhom}{{\text{\usefont{T1}{qpl}{m}{sl}D}}}

\newcommand{\aaux}{{\text{\usefont{T1}{qcs}{m}{sl}a}}}
\newcommand{\baux}{{\text{\usefont{T1}{qcs}{m}{sl}b}}}
\newcommand{\caux}{{\text{\usefont{T1}{qcs}{m}{sl}c}}}

\newcommand{\eaux}{{\text{\usefont{T1}{qcs}{m}{sl}e}}}

\newcommand{\kaux}{{\text{\usefont{T1}{qcs}{m}{sl}k}}}
\newcommand{\laux}{{\text{\usefont{T1}{qcs}{m}{sl}l}}}
\newcommand{\maux}{{\text{\usefont{T1}{qcs}{m}{sl}m}}}

\renewcommand{\AA}{\text{\sc a}}
\newcommand{\BB}{\text{\sc b}}
\newcommand{\CC}{\text{\sc c}}
\newcommand{\KK}{\text{\sc k}}
\newcommand{\LL}{\text{\sc l}}
\newcommand{\MM}{\text{\sc m}}

\begin{document}

\allowdisplaybreaks

\newcommand{\arXivNumber}{1906.10880}

\renewcommand{\PaperNumber}{056}

\FirstPageHeading

\ShortArticleName{New Explicit Lorentzian Einstein--Weyl Structures in 3-Dimensions}

\ArticleName{New Explicit Lorentzian Einstein--Weyl Structures\\ in 3-Dimensions}

\Author{Jo\"el MERKER~$^\dag$ and Pawe{\l} NUROWSKI~$^\ddag$}

\AuthorNameForHeading{J.~Merker and P.~Nurowski}

\Address{$^\dag$~Laboratoire de Math\'ematiques d'Orsay, Universit\'e Paris-Sud, CNRS, Universit\'e Paris-Saclay,\\
\hphantom{$^\dag$}~91405 Orsay Cedex, France}
\EmailD{\href{mailto:joel.merker@universite-paris-saclay.fr}{joel.merker@universite-paris-saclay.fr}}
\URLaddressD{\url{http://www.imo.universite-paris-saclay.fr/~merker/}}

\Address{$^\ddag$~Centrum Fizyki Teoretycznej, Polska Akademia Nauk,\\
\hphantom{$^\ddag$}~Al.~Lotnik\'ow 32/46, 02-668 Warszawa, Poland}
\EmailD{\href{mailto:nurowski@cft.edu.pl}{nurowski@cft.edu.pl}}
\URLaddressD{\url{http://www.fuw.edu.pl/~nurowski/}}

\ArticleDates{Received March 30, 2020, in final form June 08, 2020; Published online June 17, 2020}

\Abstract{On a $3$D manifold, a {\it Weyl geometry} consists of pairs $(g, A) =$ (metric, $1$-form) modulo gauge $\widehat{g} = {\rm e}^{2\varphi} g$, $\widehat{A} = A + {\rm d}\varphi$. In 1943, Cartan showed that every solution to the Einstein--Weyl equations $R_{(\mu\nu)} - \frac{1}{3} R g_{\mu\nu} = 0$ comes from an appropriate $3$D leaf space quotient of a $7$D connection bundle associated with a 3\textsuperscript{rd} order ODE $y''' = H(x,y,y',y'')$ modulo point transformations, provided $2$ among $3$ primary point invariants vanish
\begin{gather*}
\text{\sf W\"unschmann}(H) \equiv 0\equiv \text{\sf Cartan}(H).
\end{gather*}
We find that point equivalence of a single PDE $z_y = F(x,y,z,z_x)$ with para-CR integrability $DF := F_x + z_x F_z \equiv 0$ leads to a {\em completely similar} $7$D Cartan bundle and connection. Then magically, the (complicated) equation $\text{\sf W\"unschmann}(H) \equiv 0$ becomes
\begin{gather*}
0\equiv\text{\sf Monge}(F):=9F_{pp}^2F_{ppppp}-45F_{pp}F_{ppp}F_{pppp}+40F_{ppp}^3,\qquad p:=z_x,
\end{gather*}
whose solutions are just conics in the $\{p, F\}$-plane. As an ansatz, we take
\begin{gather*}
F(x,y,z,p):= \frac{\alpha(y)(z-xp)^2\!+\beta(y)(z-xp)p+\gamma(y)(z-xp) +\delta(y)p^2\!+\varepsilon(y)p+\zeta(y)}{\lambda(y)(z-xp)+\mu(y) p+\nu(y)},\!
\end{gather*}
with $9$ arbitrary functions $\alpha, \dots, \nu$ of $y$. This $F$ satisfies $DF \equiv 0 \equiv \text{\sf Monge}(F)$, and we show that the condition $\text{\sf Cartan}(H) \equiv 0 $ passes to a certain $\Kaux(F) \equiv 0$ which holds for any choice of $\alpha(y), \dots, \nu(y)$. Descending to the leaf space quotient, we gain $\infty$-dimensional {\em functionally parametrized and explicit} families of Einstein--Weyl structures $\big[ (g, A) \big]$ in $3$D. These structures are nontrivial in the sense that ${\rm d}A \not\equiv 0$ and $\text{\sf Cotton}([g]) \not \equiv 0$.}

\Keywords{Einstein--Weyl structures; Lorentzian metrics; para-CR structures; third-order ordinary differential equations; Monge invariant; W\"unschmann invariant; Cartan's method of equivalence; exterior differential systems}

\Classification{83C15; 53C25; 83C20; 53C25; 53C10; 53C25; 53A30; 53A55; 34A26; 34C14; 58A15; 53-08}

\section{Introduction}\label{introduction}

On an $n$-manifold $M$, a {\it Weyl geometry} is a pair $(g, A)$
of a signature $(k, n-k)$ pseudo-Riemannian metric modulo $\widehat{g}
= {\rm e}^{2\varphi} g$ together with a $1$-form $A$ modulo
$\widehat{A} = A + {\rm d}\varphi$, where $\varphi \colon M \to
\mathbb{R}$ is any function. As in Riemannian geometry, a symmetric Ricci
tensor $R_{(\mu\nu)}$ with scalar curvature $R$ can be defined ({\em
see}~\cite{Cartan-1943, Godlinski-2008, Godlinski-Nurowski-2009}
or Section~\ref{Weyl-geometry-summary}).
The {\it Einstein--Weyl equations} in vacuum
\begin{gather}\label{EW-equations-introduction}
R_{(\mu\nu)}-\tfrac{1}{n} R g_{\mu\nu} =0, \qquad 1 \leqslant \mu, \nu \leqslant n,
\end{gather}
which depend only on the class $[(g, A)]$, have raised interest, specially in dimension $n = 3$.
We find various functionally parametrized explicit families of solutions. On $\mathbb{R}^3 \ni (x,y,z)$, take for instance $5$ free arbitrary functions $\baux$, $\caux$, $\kaux$, $\laux$, $\maux$ of $y$ with derivatives $\baux'$, $\kaux'$.

\begin{Theorem}\label{Thm-5-parameterized-Einstein--Weyl-structures}
All pairs $\big(g, A\big)$ such that
\begin{gather*}
g :=(\kaux+\baux z)^2{\rm d}x^2+x^2
\big(\laux^2-\caux\maux\big){\rm d}y^2 +x^2\baux^{2}{\rm d}z^2\\
\hphantom{g :=}{}
+2x
\big(\caux\kaux z-\baux\laux z+\kaux\laux-\baux\maux\big)
{\rm d}x{\rm d}y
-
2x\baux
\big(
\kaux
+
\baux z
\big)
{\rm d}x{\rm d}z
-
2x^2\big(
\caux\kaux
-
\baux\laux
\big)
{\rm d}y{\rm d}z,
\\
A
 :=
\frac{
- \caux\kaux+\baux\laux+\baux'\kaux-\baux\kaux'
}{
x (\caux\kaux^2-2\baux\kaux\laux+\baux^2\maux)}
\big(
x\baux {\rm d}z
-
(\kaux+\baux z) {\rm d}x
\big)\\
\hphantom{A :=}{}
+\frac{\baux\laux^2
-
\caux\baux\maux
-
\baux'\kaux\laux
+
\baux\baux'\maux
+
\caux\kaux\kaux'
-
\baux\kaux'\laux
}{
\caux\kaux^2-2\baux\kaux\laux+\baux^2\maux}
{\rm d}y,
\end{gather*}
satisfy equations~\eqref{EW-equations-introduction},
hence define a Lorentzian Einstein--Weyl structure on $\mathbb{R}^3$.

Moreover, all such examples are generically conformally {\em
non-flat}, and each of the $5$ independent components of the Cotton
tensor of the underlying conformal structure $(M, [g])$ is not
identically zero.
\end{Theorem}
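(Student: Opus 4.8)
Since $g$ and $A$ are given in closed form, the statement is at bottom a verification, and the plan is to make that verification both finite and transparent by choosing a good coframe before computing any curvature. The first thing I notice is that the ${\rm d}x,{\rm d}z$ block of $g$ is a perfect square: setting
\[
\omega^1 := (\kaux+\baux z)\,{\rm d}x - x\baux\,{\rm d}z,
\]
one has $g = (\omega^1)^2 + (\text{terms all carrying } {\rm d}y)$, and moreover the first line of $A$ is proportional to this same $\omega^1$. I would therefore complete $\omega^1$ to a coframe $(\omega^1,\omega^2,\omega^3)$ adapted to this degenerate direction (and to the ${\rm d}y$-factor), chosen so that $g$ acquires a constant-coefficient Lorentzian normal form such as $2\omega^1\omega^3+(\omega^2)^2$. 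This is exactly the coframe one expects to inherit from the $7$D Cartan connection of the para-CR problem, so it is both natural and computationally decisive: it collapses the size of the structure equations ${\rm d}\omega^i$ and of the ensuing curvature.

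With the coframe fixed, I would form the Weyl connection determined by $(g,A)$ as recalled in Section~\ref{Weyl-geometry-summary}, read the connection $1$-forms off the structure equations, differentiate to obtain the curvature, symmetrise to get $R_{(\mu\nu)}$, trace to get $R$, and check $R_{(\mu\nu)}-\tfrac13 R\,g_{\mu\nu}=0$ identically in $x,y,z$ and in the free data $\baux,\caux,\kaux,\laux,\maux,\baux',\kaux'$, keeping the denominator $\caux\kaux^2-2\baux\kaux\laux+\baux^2\maux$ nonzero. Because the Einstein--Weyl system depends only on the class $[(g,A)]$, I am free to replace $A$ by any gauge representative $A+{\rm d}\varphi$ that simplifies intermediate expressions. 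The conceptual cross-check, which I would run in parallel, is that this $(g,A)$ is precisely the leaf-space descent of the $9$-function ansatz $F$ after specialising four of the functions; the Einstein--Weyl property then comes for free from $DF\equiv0\equiv\text{\sf Monge}(F)$ together with $\Kaux(F)\equiv0$ via Cartan's $1943$ theorem quoted in the introduction, and agreement of the two routes validates the explicit formulas.

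For the second assertion, I use that in dimension $3$ the Weyl tensor vanishes identically, so conformal flatness of $(M,[g])$ is controlled entirely by the Cotton tensor; it suffices to show that none of its $5$ independent components is the zero function. I would compute $\text{\sf Cotton}([g])$ in the adapted coframe and, since ``not identically zero'' requires only one witness per component, specialise $\baux,\caux,\kaux,\laux,\maux$ to the simplest admissible constants and evaluate at a generic point, exhibiting a nonzero value of each component there. The main obstacle I anticipate is not conceptual but one of bulk: carrying the derivatives $\baux',\kaux'$ correctly through the $A$-dependent part of the Ricci tensor, and ensuring that the five Cotton witnesses are chosen off the degeneracy locus where the denominators vanish. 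The adapted coframe is the device that keeps the first difficulty finite and checkable, while the second reduces to five explicit evaluations.
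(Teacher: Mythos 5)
Your proposal is correct and follows essentially the same route as the paper: there, $(g,A)$ is obtained by projecting $\theta^3\theta^3+\theta^1\theta^4+\theta^4\theta^1$ and $\Omega_3$ from the para-CR Cartan bundle of $F=\frac{-\caux(z-xp)^2-2\laux(z-xp)-\maux}{2\baux(z-xp)+2\kaux}$ (your ``conceptual cross-check,'' resting on $DF\equiv0\equiv\Maux(F)\equiv\Kaux(F)$ and Cartan's 1943 theorem), the Einstein--Weyl equations are then confirmed by ``an independent direct check'' (your primary route), and non-flatness is established exactly as you propose, by computing the five Cotton components. The only difference is which of the two verifications carries the logical weight, which is immaterial.
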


We discover in fact even
more general explicit families of solutions depending on $9$ free
arbitrary functions of $1$ variable $y$.
Explicit examples of Einstein--Weyl structures
in $3$D were known before
\cite{Calderbank-Pedersen-1999,Cartan-1943,Dunajski-Mason-Tod-2001,Eastwood-Tod-2000,
Godlinski-2008,Godlinski-Nurowski-2009, Hitchin-1982,
Jones-Tod-1985,LeBrun-Mason-2009,
Nurowski-2005,
Pedersen-Tod-1993,Tod-1992,
Tod-2000}.

According to~\cite{Cartan-1943},
all Einstein--Weyl structures may be constructed
by a certain quotient process from a $7$D Cartan bundle associated
with equivalences of 3\textsuperscript{rd} order ordinary differential
equations. Those, in turn, are known to be para-CR structures of type
$(1,1,2)$, cf.~\cite[Section~5.1.3]{Hill-Nurowski-2010}.

In the present paper, we explore the observation that PDEs on the
plane $(x,y)$ of the form $z_y = F(x,y,z,z_x)$, considered modulo
point transformations, also happen to be $(1, 1, 2)$ para-CR
structures, in certain circumstances. In
Section~\ref{z-y-F-embedded-in-w-H}, we show how equivalence classes
of $(1,1,2)$ para-CR structures associated to PDEs $z_y = F$ are
`embedded' into the space of
equivalence classes of 3\textsuperscript{rd} order ODEs.
This distinguishes a certain class
of 3\textsuperscript{rd} order ODEs
from which we construct
our explicit solutions to the Einstein--Weyl equations.

Thus, our main approach is to study point equivalences of a single
PDE of the form (novelty)
\[
z_y =F(x,y,z,z_x),
\]
with unknown $z = z(x,y)$. From
para-CR geometry \cite{Hill-Nurowski-2010, Merker-2008}, an
integrability condition is required, namely,
\[
DF :=F_x+z_x F_z \equiv0.
\]
To exclude trivial PDEs,
another point invariant condition must be assumed:
\[
F_{pp}
 \neq 0
\qquad
(\text{\rm abbreviate} \ p := z_x).
\]

In Theorem~\ref{Theorem-Cartan-bundle-z-y-F},
we construct a $7$-dimensional Cartan bundle\big/connection
$P_7 \longrightarrow J_4 \ni (x,y,z,p)$ canonically
associated to point equivalences of such PDEs $z_y = F(x,y,z,z_x)$,
we determine a canonical
coframe $\big\{ \theta^1, \theta^2, \theta^3, \theta^4,\Omega_1, \Omega_2, \Omega_3 \big\}$ on $P_7$,
and we find that its structure
equations~(\ref{structure-equations-GN-2009})
incorporate exactly $3$ primary invariants,
named $\Aaux_1$, $\Baux_1$, $\Caux_1$.

Quite
unexpectedly, we realize that these structure equations
have the same form as the structure equations of the canonical
$7$-dimensional Cartan bundle\big/connection associated
with point equivalences of 3\textsuperscript{rd} order ODEs
$y''' = H(x, y, y', y'')$. Furthermore, it is known that
quite similarly, $3$ primary differential invariants
govern such geometries.
Two among them are: the {\it W\"unschmann invariant}
$\Waux(H)$ \cite{Wunschmann-1905} and the {\it Cartan invariant} $\Caux(H)$~\cite{Cartan-1941,Cartan-1943}.
Since Cartan 1943,
it is also known \cite{Cartan-1943,Fritelli-Kozameh-Newman-2001,Godlinski-2008,
Godlinski-Nurowski-2009, Hitchin-1982} that {\em all} solutions to the
Einstein--Weyl structure equations~(\ref{EW-equations-introduction})
can be obtained from ODEs satisfying $\Waux(H) \equiv 0 \equiv
\Caux(H)$. Translating
what is known for ODEs
or performing computations from scratch,
we will
set up and state Cartan's construction {\em from the PDE side},
{\em see}
Theorem~\ref{Theorem-Wey-g-A-from-ODE}.

But from the ODE side unfortunately,
it is quite difficult to solve W\"unschmann's nonlinear
equation incorporating $25$ differential monomials
\begin{gather*}
0 \equiv\Waux(H) :=
- 18 qH_qH_{pq}
+
9 pH_yH_{qq}
+
18 qHH_{pqq}
+
9 q H_pH_{qq}
-
18 pH_qH_{yq}
+
18 pHH_{yqq}\\
\hphantom{0 \equiv\Waux(H) :=}{}
-
9 HH_qH_{qq}
+
18 pqH_{ypq}
+
18 pH_{xyq}
+
18 q H_{xpq}
+
9 H_xH_{qq}
+
18 HH_{xqq}\\
\hphantom{0 \equiv\Waux(H) :=}{}
-
18 H_qH_{xq}
+
18 H_pH_q
+
9 H_{xxq}
-
27 H_{xp}
+
4 H_q^3
+
9 p^2H_{yyq}
-
27 pH_{yp}\\
\hphantom{0 \equiv\Waux(H) :=}{}
+
9 qH_{yq}
+
9 q^2H_{ppq}
-
27 qH_{pp}
-
18 HH_{pq}
+
9 H^2H_{qqq}
+
54 H_y.
\end{gather*}

This inspired us to try to work on the PDE side $z_y = F(x,y,z,z_x)$,
instead of the ODE side. Then {\em magically}, $\Waux(H) \equiv 0$
transforms into the much simpler classical invariant of Monge~\cite{Monge-1810}
\[
0 \equiv \text{\sf Monge}(F)
 :=
9 F_{pp}^2 F_{ppppp}
-
45 F_{pp} F_{ppp} F_{pppp}
+
40 F_{ppp}^3,
\]
When $F_{pp} \neq 0$, it is known that $\Maux(F) \equiv 0$ holds if
and only if there exist functions $\AA$, $\BB$, $\CC$, $\KK$, $\LL$,
$\MM$ of $(x,y,z)$ such that
\[
0
 \equiv
\AA F^2
+
2\BB Fp
+
\CC p^2
+
2\KK F
+
2\LL p
+
\MM.
\]
Assuming $\AA := 0$, we obtain the following

\begin{Proposition}
The general solution $F = F(x,y,z,p)$ to
\begin{gather*}
0 \equiv F_x+p F_z,\\
0 \equiv 0
+
2\BB Fp
+
\CC p^2
+
2\KK F
+
2\LL p
+
\MM
\end{gather*}
is
\[
F =\frac{\alpha(y)(z-xp)^2+\beta(y)(z-xp)p+\gamma(y)(z-xp)+
\delta(y)p^2+\varepsilon(y)p+\zeta(y)}{
\lambda(y) (z-xp)+\mu(y) p+\nu(y)},
\]
with $9$ arbitrary functions
$\alpha$, $\beta$, $\gamma$, $\delta$, $\varepsilon$, $\zeta$,
$\lambda$, $\mu$, $\nu$ of $y$.
\end{Proposition}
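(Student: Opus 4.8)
The plan is to impose the two constraints separately and then intersect the resulting solution sets. Throughout I write $w:=z-xp$ and record that $w_x+p\,w_z=-p+p=0$, so that \emph{any} function of the form $\Phi(y,w,p)$ automatically satisfies the first equation $F_x+pF_z\equiv0$.

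Sufficiency is a direct check. The displayed $F$ is of the form $\Phi(y,w,p)$, hence $F_x+pF_z\equiv0$. For the second equation, substituting $w=z-xp$ turns the numerator into a polynomial of degree $2$ in $p$ and the denominator into one of degree $1$, with coefficients polynomial in $(x,y,z)$; clearing the denominator, the identity $\text{(numerator)}-\text{(denominator)}\cdot F\equiv0$ is precisely of the stated shape $2\BB Fp+\CC p^2+2\KK F+2\LL p+\MM\equiv0$ with $\AA\equiv0$, the functions $\BB,\CC,\KK,\LL,\MM$ of $(x,y,z)$ being read off from the numerator and denominator. Thus every member of the family solves both equations.

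For completeness I would proceed in three steps. Solving $F_x+pF_z\equiv0$ by characteristics, whose leaves are $\{y=\text{const},\,p=\text{const},\,z-xp=\text{const}\}$, gives $F=\Phi(y,w,p)$ for an arbitrary $\Phi$. Next, since $\AA\equiv0$ the second equation is linear in $F$ and solves to
\[
F=\frac{N}{D},\qquad N:=-\bigl(\CC p^2+2\LL p+\MM\bigr),\quad D:=2(\BB p+\KK),
\]
a ratio of a quadratic over an affine polynomial in $p$ with coefficients in $(x,y,z)$; here $(N,D)$ is determined only up to a common nonvanishing factor $\rho(x,y,z)$, the freedom of rescaling the second equation, which leaves $F$ unchanged. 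Finally, imposing $F_x+pF_z\equiv0$ on this rational shape, with $N=ap^2+bp+c$ and $D=dp+e$, yields the identity $(N_x+pN_z)\,D=N\,(D_x+pD_z)$, a polynomial in $p$ of degree $4$ whose five coefficients constitute a first-order linear system in the functions $a,\dots,e$ of $(x,y,z)$.

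After using the factor $\rho$ to normalise, this system should decouple into
\[
a_z=d_z=0,\quad c_x=e_x=0,\quad a_x+b_z=0,\quad b_x+c_z=0,\quad d_x+e_z=0,
\]
so that $a,d$ depend only on $(x,y)$ and $c,e$ only on $(z,y)$. The three cross relations then force $a,c$ to be quadratic and $d,e$ affine in their base variable, with $a_{xx}=c_{zz}=:2\alpha(y)$ and $-d_x=e_z=:\lambda(y)$ shared between the two pairs; a single integration reproduces the displayed $N$ and $D$ together with the nine functions $\alpha,\dots,\nu$ of $y$. The main obstacle is exactly this normalisation: in an arbitrary gauge the extreme coefficients of the degree-$4$ identity read only $(a/d)_z=0$ and $(c/e)_x=0$, and the five equations stay coupled, so the crux is to select the factor $\rho$ that splits them into the clean divergence-type relations above. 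Once the gauge is fixed the integration is elementary, and the shared coefficients — notably $\alpha$ across $(a,b,c)$ and $\lambda$ across $(d,e)$ — are precisely what couple numerator and denominator and produce the mixed monomials $\beta\,(z-xp)\,p$ and $\mu\,p$ in the final expression.
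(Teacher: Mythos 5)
Your sufficiency half is correct and coincides in substance with all that the paper itself establishes: since $y$, $p$ and $w=z-xp$ are first integrals of $D=\partial_x+p\,\partial_z$, any $F$ obtained by solving a relation that is linear in $F$ and jointly quadratic in $(w,p)$ with coefficients depending on $y$ alone automatically satisfies $DF\equiv0$, and substituting the $p$-affine quantity $w=z-xp$ converts that relation into one of the stated shape $2\BB Fp+\CC p^2+2\KK F+2\LL p+\MM\equiv0$ with $\BB,\dots,\MM$ functions of $(x,y,z)$. The paper never argues more than this direction (Section~\ref{how-construct-Einstein--Weyl-metrics} introduces the family as an ansatz, ``can be achieved for instance by taking\dots''), and sufficiency is all that the subsequent construction of Einstein--Weyl structures actually uses.

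The gap is in the completeness half, and you have named it yourself without closing it. Writing $F=N/D$ with $N=ap^2+bp+c$, $D=dp+e$, the identity $(N_x+pN_z)D=N(D_x+pD_z)$ gives five coefficient equations whose extreme ones are $a_zd-ad_z=0$ and $c_xe-ce_x=0$; you then assert that after rescaling by $\rho$ the system \emph{should} decouple into $a_z=d_z=c_x=e_x=0$, $a_x+b_z=0$, $b_x+c_z=0$, $d_x+e_z=0$. That normalised system does integrate to exactly the displayed nine-function family (a short computation gives $a=\alpha x^2-\beta x+\delta$, $d=-\lambda x+\mu$, etc.), but the existence of the normalising factor is precisely what is not proved: one needs $(\log\rho)_z=-a_z/a=-d_z/d$ and $(\log\rho)_x=-c_x/c=-e_x/e$ simultaneously, whose integrability condition $\partial_x(a_z/a)=\partial_z(c_x/c)$ must be extracted from the three middle, still coupled, equations; and one must further verify that in this gauge those middle equations reduce to the clean divergence relations rather than merely being consistent with them. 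Degenerate configurations ($d\equiv0$, a common factor of $N$ and $D$, loci where $\BB p+\KK$ vanishes) are also left untreated. As written, your argument shows that the family solves the system, not that it exhausts the solutions, so the Proposition as stated (``the general solution\dots is'') is not yet proved.
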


Of course, to the Cartan invariant $\Caux(H)$ from the ODE side
there corresponds from the PDE side
a certain invariant we name $\Kaux(F)$:
its expression appears in
Theorem~\ref{Theorem-Cartan-bundle-z-y-F}.
Miraculously, then, a direct calculation
shows that no further constraint is imposed.

\begin{Proposition}
For any choice of
$\alpha(y)$, $\beta(y)$, $\gamma(y)$, $\delta(y)$, $\varepsilon(y)$,
$\zeta(y)$,
$\lambda(y)$, $\mu(y)$, $\nu(y)$, the second condition
\[
\Kaux\big(F_{\alpha, \dots, \nu}\big)
 \equiv
0
\]
for obtaining Weyl pairs $[(g, A)]$ satisfying the Einstein--Weyl field equations~\eqref{EW-equations-introduction}
holds automatically.
\end{Proposition}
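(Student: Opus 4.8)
The statement is an identity to be \emph{verified}: one must substitute the ansatz $F = F_{\alpha,\dots,\nu}$ of the preceding Proposition into the explicit expression for $\Kaux(F)$ recorded in Theorem~\ref{Theorem-Cartan-bundle-z-y-F}, and confirm that the result collapses to $0$ for every choice of the nine functions $\alpha,\dots,\nu$ of $y$. Since no structural reason is visible \emph{a priori} (the paper itself calls this ``miraculous''), the plan is not to expand $\Kaux$ blindly but to organize the computation so that the cancellation becomes transparent and mechanically checkable.

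The decisive simplification is to introduce the single combination $u := z - xp$ and to observe that the numerator and the denominator of $F$ are, respectively, a quadratic and a linear form in $(u,p)$ with coefficients depending only on $y$; hence $F = \Phi(y,u,p)$ for a rational function $\Phi$. On $J_4 \ni (x,y,z,p)$ the coordinates are independent, with $u_x = -p$, $u_z = 1$, $u_p = -x$, $u_y = 0$, so the chain rule gives $F_x = -p\,\Phi_u$, $F_z = \Phi_u$, whence $F_x + p F_z \equiv 0$ automatically — this re-proves $DF \equiv 0$ — while $F_p = -x\,\Phi_u + \Phi_p$, and every higher $x$-, $z$- and $p$-derivative of $F$ becomes a polynomial in $x$ and $p$ whose coefficients are $u$- and $p$-derivatives of $\Phi$. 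First I would therefore rewrite each differential monomial occurring in $\Kaux(F)$ purely in terms of the iterated derivatives $\Phi_y, \Phi_u, \Phi_p$, together with explicit powers of $x$ and $p$.

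Next, writing $\Phi = N/D$ with $N$ quadratic and $D := \lambda u + \mu p + \nu$ linear in $(u,p)$, each such derivative is a rational function whose denominator is a power of $D$; after clearing denominators $\Kaux(F)$ takes the shape $\mathcal{N}/D^{\,k}$, where $\mathcal{N}$ is a polynomial in $(x,z,p)$ and in the nine functions $\alpha,\dots,\nu$ together with the first derivatives $\alpha',\dots,\nu'$ produced by the $\Phi_y$-terms. The claim is precisely that $\mathcal{N}$ is the zero polynomial. I would verify this by collecting $\mathcal{N}$ monomial-by-monomial — organized most economically by powers of $u = z-xp$ and $p$ — and checking that every coefficient, itself a polynomial in the nine functions and their first derivatives, vanishes identically; at this stage a computer-algebra system is indispensable, and it is the $u$-substitution that keeps the number of genuinely independent variables, hence the size of $\mathcal{N}$, within reach.

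The main obstacle is the sheer combinatorial bulk of $\Kaux(F)$ once all derivatives are expanded: the expression is large, the denominators $D^{\,k}$ must be tracked exactly, and the derivatives $\alpha',\dots,\nu'$ enter linearly through $\Phi_y$ and must be carried symbolically. To tame this I would, where possible, exploit the point-transformation invariance of the condition $\Kaux = 0$: admissible changes of $(x,y,z)$ can be used to normalize part of the data (for instance the denominator functions $\lambda,\mu,\nu$), reducing the number of free parameters before substitution, after which the identity $\Kaux\big(F_{\alpha,\dots,\nu}\big)\equiv 0$ for the full family follows from the invariance of the vanishing locus. Either way, the upshot is that no constraint beyond the ansatz survives, so that every $F_{\alpha,\dots,\nu}$ descends to a Weyl pair $[(g,A)]$ solving~\eqref{EW-equations-introduction}.
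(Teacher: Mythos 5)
Your proposal is correct and coincides with what the paper actually does: the paper offers no structural argument either, stating only that ``a direct calculation shows that no further constraint is imposed,'' i.e., one substitutes the ansatz into the explicit sixth-order expression for $\Kaux(F)$ from Theorem~\ref{Theorem-Cartan-bundle-z-y-F} and checks symbolically that it vanishes. Your organization of that computation via $u=z-xp$ (which also makes $DF\equiv 0$ transparent) is a sensible way to carry out the verification the paper merely asserts, and introduces no gap.
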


We then get~-- quite long~-- formulas for pairs
$\big[ (g, A) \big]$ expressed explicitly in terms of $\alpha$,
$\beta$, $\gamma$, $\delta$, $\varepsilon$, $\zeta$, $\lambda$, $\mu$,
$\nu$. The subfamily for which $\beta = 0$, $\delta = 0$,
$\varepsilon = 0$, $\mu = 0$ corresponds (with different notations) to
Theorem~\ref{Thm-5-parameterized-Einstein--Weyl-structures}.

\begin{Theorem}\label{Thm-9-parameters-family}
Same conclusion as in
Theorem~{\rm \ref{Thm-5-parameterized-Einstein--Weyl-structures}} with
\begin{gather*}
g :=\tau^1 \tau^2
+
\tau^2 \tau^1
+
\tau^3 \tau^3,
\\
A := \tau^3 \frac{1}{2\Pi}\big(
\gamma\lambda x
-
\gamma\mu
+
x\lambda\nu'
+
\beta\lambda z
+
\lambda\mu' z-
2\alpha\mu z\\
\hphantom{A := \tau^3 \frac{1}{2\Pi}\big(}{}
-
\lambda'\mu z
-
\mu\nu'
-
x\lambda'\nu
-
2x\alpha\nu
+
\beta\nu
+
\mu'\nu
\big),
\end{gather*}
with the coframe
\begin{gather*}
\tau^1
 :=
{\rm d}x
+
\frac{{\rm d}y}{x\lambda-\mu}
\big(x\beta
-
\delta
-
x^2\alpha
\big),
\\
\tau^2
 :=
\frac{2 {\rm d}y}{x\lambda-\mu}
\Pi,
\\
\tau^3 :=
(-\lambda z-\nu)\,
{\rm d}x
+
\frac{1}{x\lambda-\mu}\,
{\rm d}y
\big(
{-}\varepsilon\mu
+
2x^2\alpha\nu
+
x\gamma\mu
-
2x\beta\nu
-
\beta\mu z
+
2\delta\lambda z
+
2x\alpha\mu z\\
\hphantom{\tau^3 :=}{}
+
x\varepsilon\lambda
+
2\delta\nu
-
x^2\gamma\lambda
-
x\beta\lambda z
\big)
+
(x\lambda-\mu)\,
{\rm d}z,
\end{gather*}
and the function
\begin{gather*}
\Pi := x^2\zeta\lambda^2
+
\alpha\mu^2z^2
+
2x\alpha\mu\nu z
+
x^2\alpha\nu^2
-
\beta\lambda\mu z^2
-
x\beta\lambda\nu z
+
\delta\lambda^2 z^2
+
x\varepsilon\lambda^2 z
-
2x\zeta\lambda\mu\\
\hphantom{\Pi :=}{}
-
\beta\mu\nu z
-
x\beta\nu^2
+
2\delta\lambda\nu z
-
\varepsilon\lambda\mu z
+
x\varepsilon\lambda\nu
-
x\gamma\lambda\mu z
-
x^2\gamma\lambda\nu+
\zeta\mu^2
+
\delta\nu^2-
\varepsilon\mu\nu
\\
\hphantom{\Pi :=}{}
+
\gamma\mu^2z
+
x\gamma\mu\nu,
\end{gather*}
again with ${\rm d}A \not\equiv 0$ and $\text{\sf Cotton}
([g]) \not \equiv 0$.
\end{Theorem}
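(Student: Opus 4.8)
The plan is to specialize the general PDE-side Cartan construction of Theorem~\ref{Theorem-Wey-g-A-from-ODE} to the explicit ansatz $F = F_{\alpha,\dots,\nu}$ furnished by the Proposition describing the general solution of $\Maux(F)\equiv 0$ (taken with $\AA := 0$). That Proposition guarantees at once that $DF\equiv 0$ and $\Maux(F)\equiv 0$, so the primary invariant of $P_7\to J_4$ which corresponds to W\"unschmann's $\Waux(H)$ vanishes identically; the subsequent Proposition guarantees that the second obstruction $\Kaux(F_{\alpha,\dots,\nu})$ --- the PDE-side avatar of Cartan's $\Caux(H)$ --- also vanishes identically, for \emph{every} choice of the nine functions. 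Since the nondegeneracy $F_{pp}\neq 0$ holds on a dense open set of parameter space, Theorem~\ref{Theorem-Wey-g-A-from-ODE} then applies and produces, on the $3$D leaf space of the bundle, a well-defined class $[(g,A)]$ solving the Einstein--Weyl equations~\eqref{EW-equations-introduction}. In particular the Einstein--Weyl property itself requires \emph{no} direct curvature computation: it is inherited from the vanishing of the two primary invariants. The real task is to make the abstract quotient completely explicit.

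First I would substitute $F_{\alpha,\dots,\nu}$ into the canonical coframe $\{\theta^1,\dots,\theta^4,\Omega_1,\Omega_2,\Omega_3\}$ on $P_7$ recorded in Theorem~\ref{Theorem-Cartan-bundle-z-y-F}, normalize the connection, and identify the $4$-dimensional foliation --- the three structure-group fibers together with the single characteristic direction transverse to $(x,y,z)$ along which $p$ is eliminated --- whose leaf space is coordinatized by $(x,y,z)\in\mathbb{R}^3$. Pushing the degenerate conformal data down this foliation yields a rank-$3$ representative, which I would then match to the stated normal form by checking that the three $1$-forms $\tau^1,\tau^2,\tau^3$ and the scalar $\Pi$ written in the statement are \emph{basic} --- i.e.\ they annihilate the fiber and characteristic directions and are constant along the leaves --- that $g=\tau^1\tau^2+\tau^2\tau^1+\tau^3\tau^3$ reproduces the pushed-down metric up to conformal rescaling, and that $A$ reproduces the Weyl connection modulo ${\rm d}\varphi$. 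Collapsing the parameters via $\beta=\delta=\varepsilon=\mu=0$ should then reduce $\Pi$, the $\tau^i$, $g$ and $A$ to the data of Theorem~\ref{Thm-5-parameterized-Einstein--Weyl-structures} after the indicated change of notation, giving an internal consistency check.

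It then remains to establish nontriviality. For ${\rm d}A\not\equiv 0$ I would differentiate the explicit $A$ and exhibit one component of ${\rm d}A$ that is a nonzero rational function of $(x,y,z)$ and the nine parameters. For conformal non-flatness I would compute the Cotton tensor of $[g]$ directly from the coframe, using the exterior derivatives ${\rm d}\tau^i$ and the Levi-Civita connection of $g$; in $3$D the Weyl tensor vanishes and conformal flatness is controlled precisely by this symmetric trace-free object, whose $5$ independent components I would display as explicit rational expressions in $\alpha,\dots,\nu$ and their $y$-derivatives. The assertion is then that each such component is not identically zero, i.e.\ vanishes only on a proper subvariety of the space of parameter functions.

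The main obstacle is computational rather than conceptual. After substituting $F_{\alpha,\dots,\nu}$, the numerators appearing in $\Pi$ and in the $\tau^i$ are the large polynomials quoted in the statement, and one must verify that all basicness and fiber-derivative conditions close up \emph{identically} in $(x,y,z,p)$ --- a verification I expect to delegate to computer algebra, exactly as with the identity $\Kaux(F_{\alpha,\dots,\nu})\equiv 0$. The delicate structural point is the choice of coframe $\tau^1,\tau^2,\tau^3$ that separates the conformal factor into the \emph{single} scalar $\Pi$: a poorer choice would leave $g$ in a form whose Cotton tensor is hopeless to write down, whereas the present normalization is what makes the final nonvanishing of the $5$ Cotton components legible by inspection.
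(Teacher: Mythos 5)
Your proposal follows essentially the same route as the paper: the two Propositions give $DF\equiv 0\equiv\Maux(F)\equiv\Kaux(F)$ for the nine-function ansatz, Theorem~\ref{Theorem-Wey-g-A-from-ODE} then yields the Einstein--Weyl property on the leaf space of the rank-$4$ distribution annihilated by $\theta^1$, $\theta^3$, $\theta^4$, and the explicit $(g,A)$, the specialization to Theorem~\ref{Thm-5-parameterized-Einstein--Weyl-structures}, and the nontriviality claims ${\rm d}A\not\equiv 0$, $\text{\sf Cotton}([g])\not\equiv 0$ are all obtained by direct (computer-assisted) computation of the projected coframe and the five Cotton components, exactly as the paper does. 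The only cosmetic difference is that the paper additionally runs an independent direct verification of the Einstein--Weyl equations on the final formulas, which you correctly note is logically redundant.
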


At the end, we also present other families of functionally parametrized solutions, when $\AA \neq 0$.

\section{Weyl geometry: a summary}\label{Weyl-geometry-summary}

In Einstein's theory, gravity is described in terms of a
(pseudo-)riemannian metric $g$ called the {\it gravitational
potential}. In Maxwell's theory, the electromagnetic field is
described in terms of a~$1$-form $A$ called the {\it Maxwell
potential}.

In his attempt {\em Raum, Zeit, Materie}~\cite{Weyl-1919} of unifying gravitation and electromagnetism, Weyl was inspired to introduce the synthetic geometric structure on any $n$-dimensional manifold $M^n$ which consists of classes of such pairs $[(g, A)]$ under the equivalence relation
\[
(g, A) \sim \big(\widehat{g}, \widehat{A}\big)
\]
holding by definition if and only if there exists a function
$\varphi \colon M \longrightarrow \mathbb{R}$ such that
\begin{enumerate}\itemsep=0pt
\item[(1)]$\widehat{g} = {\rm e}^{2 \varphi} g$;
\item[(2)] $\widehat{A} = A + {\rm d}\varphi$.
\end{enumerate}

Clearly, the electromagnetic field strength $F := {\rm d}A$ depends only on the class. The signature $(k, n-k)$ of $g$ can be arbitrary. Conformally Einstein structures from ordinary conformal geometry are a special class of Weyl structures, corresponding to the choice of a closed~-- hence locally exact~-- $1$-form~$A$.

Inspired by Levi-Civita, Weyl established that to such a~{\it Weyl structure} $(M, [(g, A)])$ is associated a~{\em unique} connection $\Dhom$ on $TM$ satisfying:
\begin{enumerate}\itemsep=0pt
\item[(A)] $\Dhom$ has no torsion;
\item[(B)] $\Dhom g = 2 A g$ for any representative $(g, A)$ of the class $[(g, A)]$.
\end{enumerate}

In any (local) coframe $\omega^\mu$, $\mu = 1, \dots, n$, for the
cotangent bundle $T^\ast M$ in which $g = g_{\mu \nu} \omega^\mu
 \omega^\nu$, the connection $1$-forms ${\Gamma^\mu}_\nu$ of
$\Dhom$, or equivalently the $\Gamma_{\mu\nu} := g_{\mu \rho}
{\Gamma^\rho}_\nu$, are indeed uniquely defined from the more
explicit conditions:
\begin{enumerate}\itemsep=0pt
\item[(A$'$)]
${\rm d}\omega^\mu + {\Gamma^\mu}_\nu \wedge \omega^\nu = 0$;
\item[(B$'$)]
$\Dhom g_{\mu \nu} := {\rm d} g_{\mu \nu} - \Gamma_{\mu\nu} - \Gamma_{\nu \mu} = 2 A g_{\mu\nu}$.
\end{enumerate}

Then the {\it curvature} of this Weyl connection identifies with the collection of $n^2$ {\it curvature $2$-forms}
\[
{\Omega^\mu}_\nu
 :=
{\rm d}{\Gamma^\mu}_\nu
+
{\Gamma^\mu}_\rho
\wedge
{\Gamma^\rho}_\nu,
\]
which produce the {\it curvature tensor}
${R^\mu}_{\nu\rho\sigma}$ by expanding in the given coframe
$\omega^\mu$
\[
{\Omega^\mu}_\nu
 =
\tfrac{1}{2}
{R^\mu}_{\nu\rho\sigma}
\omega^\rho
\wedge
\omega^\sigma.
\]

It turns out that ${R^\mu}_{\nu\rho\sigma}$
is a {\it tensor density},
which means in particular that its vanishing
is independent of the choice of a representative $(g, A)$,
and hence as such, serves as a starting point for all invariants
of a Weyl geometry $(M, [(g, A)])$, produced by
covariant differentiation.

Other invariant objects are:
\begin{itemize}\itemsep=0pt
\item the (Weyl--)Ricci tensor $R_{\mu\nu} := {R^\rho}_{\mu\rho\nu}$;
\item its symmetric part $R_{(\mu\nu)} :=\frac{1}{2} ( R_{\mu\nu} + R_{\nu\mu} )$;
\item its antisymmetric part $R_{[\mu\nu]} :=
\frac{1}{2} ( R_{\mu\nu} - R_{\nu\mu} )$.
\end{itemize}

In particular, an appropriately contracted Bianchi identity shows that in $3$-dimensions
\[
R_{[\mu\nu]}
 =
-
\tfrac{3}{2}
F_{\mu\nu},
\]
where $F = {\rm d} A =: \frac{1}{2} F_{\mu \nu}
\omega^\mu \wedge \omega^\nu$.

In~\cite{Cartan-1943}, \'Elie Cartan proposed dynamical
Einstein equations for a Weyl geometry $( M, [(g, A)])$
postulating that the trace-free part of the symmetric Ricci
tensor vanishes
\begin{gather}
\label{Einstein--Weyl-equations}
R_{(\mu\nu)}
-\tfrac{1}{n}
R
g_{\mu\nu}
 =
0,
\end{gather}
where $R := g^{\mu\nu} R_{\mu\nu}$, with
$g^{\mu\rho} g_{\rho\nu} = {\delta^\mu}_\nu$
and $n = \dim M$.

These equations~(\ref{Einstein--Weyl-equations}) are
called {\it Einstein--Weyl equations}, and
a Weyl geometry satisfying~(\ref{Einstein--Weyl-equations}) is called
an {\it Einstein--Weyl structure}. The reason for this
name is as follows.

Since a Weyl structure $( M, [g, A])$ with
vanishing $F = {\rm d}A \equiv 0$
is equivalent to a plain (pseudo-)conformal structure $(M, [g])$
and since the Weyl connection $\Dhom$ then reduces to the
Levi-Civita connection, these
equations~(\ref{Einstein--Weyl-equations})
are a natural generalization of Einstein's field equations.
According to Weyl's approach,
a gravity potential $g$ is thereby {\em coupled} with an electromagnetic field $F = {\rm d}A$.

\section{Cartan's solution to the Einstein--Weyl vacuum equations}\label{Cartan-solution-EW-equations}

In~\cite{Cartan-1941}, Cartan gave a geometric description of
all solutions to the Einstein--Weyl equations~(\ref{Einstein--Weyl-equations})
in $3$-dimensions.
In particular, he showed that there is a {\em one-to-one
correspondence} between 3\textsuperscript{rd}-order
ODEs $y''' = H(x,y,y',y'')$ considered modulo point transformations
of variables which satisfy certain two point-invariant conditions
\begin{gather}
\Waux(H) \equiv 0,\tag{\text{\sf W\"unschmann}}\\
\Caux(H) \equiv 0,\tag{\text{\sf Cartan}}
\end{gather}
and $3$-dimensional Einstein--Weyl structures with
{\em Lorentzian} metrics $g$ of signature $(2,1)$.
Abbreviating $p := y'$, $q := y''$, in terms
of the total differentiation operator
\[
\Dhom
 :=
\partial_x
+
p \partial_y
+
q \partial_p
+
H \partial_q,
\]
their explicit expressions are
\begin{gather}\label{explicit-Wunschmann-W}
\Waux
 :=
9 D D H_q
-
27 D H_p
-
18 H_q D H_q
+
18 H_q H_p
+
4 H_q^3
+
54 H_y,
\\
\label{explicit-Cartan-C}
\Caux
 :=
18 H_{qq} D H_q
-
12 H_{qq} H_q^2
-
54 H_{qq} H_p
+
36 H_{pq} H_q
-
108 H_{yq}
+
54 H_{pp}.
\end{gather}

Although Cartan's geometric arguments \cite{Cartan-1943} offer, in the Lorentzian setting,
a~complete~-- but abstract -- understanding
of the space of all solutions of the Einstein--Weyl
equations~(\ref{Einstein--Weyl-equations}),
it is quite difficult to find {\em explicit} solutions
to the W\"unschmann-Cartan equations $0 \equiv \Waux(H) \equiv
\Caux(H)$, which would provide workable formulas for such
Einstein--Weyl structures.

Some particular solutions are known, e.g.,
\[
H
 =
\frac{3 q^2}{2 p},
\qquad
H
 =
\frac{3 q^2p}{p^2+1},
\qquad
H
 =
q^{3/2},
\qquad
H
 =
\alpha
\frac{
\big(2 qy-p^2\big)^{3/2}}{
y^2},
\qquad
\alpha \in \mathbb{R},
\]
or the `horrible'
\[
H
 =
\frac{pq \big({-}12+3 pq-8 \sqrt{1-pq}\big)
+8 \big(1+\sqrt{1-pq}\big)}{
p^3}.
\]
They were all obtained by rather {\em ad hoc} methods.

In fact, the main difficulty in getting a systematic approach
to finding the solutions is an annoying nonlinearity
of the W\"unschmann condition $\Waux \equiv 0$.

\section{Third-order ODEs modulo point transformations of variables}\label{3-rd-order-ODE-point-transformations}

It was Cartan \cite{Cartan-1941} who solved the equivalence
problem for 3\textsuperscript{rd} order ODEs considered modulo point
transformations. Nowadays, the result may be stated more elegantly in
terms of a certain Cartan connection \cite{Godlinski-2008,Godlinski-Nurowski-2009}, as follows.

To any 3\textsuperscript{rd} order ODE
\begin{align}
\label{y-third-H-x-y-etc}
y'''
 =
H\big(x,y,y',y''\big),
\end{align}
one associates a contact-like coframe on the space
$J_4 \ni (x,y,p,q)$ of $2$-jets of graphs $x \longmapsto y(x)$:
\begin{gather}
\label{initial-coframe-3-rd-ODE}
\omega^1 :={\rm d}y-p \,{\rm d}x,\qquad
\omega^2 :={\rm d}x,\qquad
\omega^3 :={\rm d}p-q\,{\rm d}x,\qquad
\omega^4 :={\rm d}q-H(x,y,p,q)\,{\rm d}x.
\end{gather}
It follows that if a 3\textsuperscript{rd} order ODE~(\ref{y-third-H-x-y-etc})
undergoes a point transformation of variables
\[
(x,y)
 \longmapsto
\big(
\overline{x},
\overline{y}
\big)
 =
\big(
\overline{x}(x,y),
\overline{y}(x,y)
\big),
\]
then the $1$-forms $\big(\omega^1, \omega^2, \omega^3, \omega^4 \big)$
transform as
\begin{gather}
\label{initial-G-structure-3-rd-ODE}
\left(
\begin{matrix}
\omega^1
\\
\omega^2
\\
\omega^3
\\
\omega^4
\end{matrix}
\right)
 \longmapsto
\left(
\begin{matrix}
{\sf u}_1 & 0 & 0 & 0
\\
{\sf u}_2 & {\sf u}_3 & 0 & 0
\\
{\sf u}_4 & 0 & {\sf u}_5 & 0
\\
{\sf u}_6 & 0 & {\sf u}_7 & {\sf u}_8
\end{matrix}
\right)
\left(
\begin{matrix}
\omega^1
\\
\omega^2
\\
\omega^3
\\
\omega^4
\end{matrix}
 \right)
 =:
\left(
\begin{matrix}
\theta^1
\\
\theta^2
\\
\theta^3
\\
\theta^4
\end{matrix}
\right),
\end{gather}
where the ${\sf u}_i$ are certain functions on $J_4$.

Actually, Cartan assures us that the entire equivalence problem
for 3\textsuperscript{rd} order ODEs considered modulo point
transformations of variables is the same as the equivalence
problem for $1$-forms~(\ref{initial-coframe-3-rd-ODE}),
considered modulo
transformations~(\ref{initial-G-structure-3-rd-ODE}).
There is a unique way of reducing these eight group parameters
${\sf u}_i$ to only three ${\sf u}_3$, ${\sf u}_5$, ${\sf u}_7$,
the other ones being expressed in terms of them.
This is achieved by forcing the exterior differentials
of the $\theta^\mu$'s to satisfy the
EDS~(\ref{structure-equations-GN-2009}) below.

\begin{Theorem}[\cite{Cartan-1941, Godlinski-2008, Godlinski-Nurowski-2009}]\label{Theorem-Cartan-connection-3-rd-ODE}
A $3$\textsuperscript{rd} order ODE $y''' = H(x,y,y',y'')$ with its associated $1$-forms
\begin{gather*}
\omega^1
=
{\rm d}y
-
p\,{\rm d}x,
\qquad
\omega^2
=
{\rm d}x,
\qquad
\omega^3
=
{\rm d}p
-
q\,{\rm d}x,
\qquad
\omega^4
=
{\rm d}q
-
H(x,y,p,q)\,
{\rm d}x,
\end{gather*}
uniquely defines a $7$-dimensional fiber bundle $P_7 \longrightarrow J_4$ over the space of
second jets $J_4 \ni (x, y, p, q)$ and a unique coframe
$\big\{ \theta^1, \theta^2, \theta^3, \theta^4, \Omega_1, \Omega_2,
\Omega_3 \big\}$ on $P_7$ enjoying structure equations of the shape
\begin{gather}
{\rm d}\theta^1 =\Omega_1\wedge\theta^1-\theta^2\wedge\theta^3,\nonumber\\
d\theta^2 =
(\Omega_1-\Omega_3)\wedge\theta^2
+
\boxed{\Baux_1\!\!} \, \theta^1\wedge\theta^3
-
\Baux_2 \theta^1\wedge\theta^4,\nonumber\\
{\rm d}\theta^3
 =
\Omega_2\wedge\theta^1
+
\Omega_3\wedge\theta^3
+
\theta^2\wedge\theta^4,\nonumber\\
{\rm d}\theta^4
 =
(2\Omega_3-\Omega_1)\wedge\theta^4
-
\Omega_2\wedge\theta^3
-
\boxed{\Aaux_1\!\!} \, \theta^1\wedge\theta^2,\nonumber\\
{\rm d}\Omega_1
 =
\Omega_2\wedge\theta^2
+
(\Aaux_2-2\Caux_1) \theta^1\! \wedge\theta^2
+
(3\Baux_3+\Eaux_1) \theta^1\!\wedge\theta^3
+
(2\Baux_1-3\Baux_4) \theta^1\wedge\theta^4
+
\Baux_2 \theta^3\wedge\theta^4,\nonumber\\
{\rm d}\Omega_2
 =
\Omega_2\wedge(\Omega_1-\Omega_3)
-
\Aaux_3 \theta^1\wedge\theta^2
+
\Eaux_2 \theta^1\wedge\theta^3
-
(\Baux_3+\Eaux_1) \theta^1\wedge\theta^4
+
\boxed{\Caux_1\!\!}\, \theta^2\wedge\theta^3\nonumber\\
\hphantom{{\rm d}\Omega_2=}{} +
(\Baux_1-2\Baux_4) \theta^3\wedge\theta^4,\nonumber\\
{\rm d}\Omega_3
 =
- \Caux_1 \theta^1\wedge\theta^2
+
(2\Baux_3+\Eaux_1) \theta^1\wedge\theta^3
+
2 (\Baux_1-\Baux_4) \theta^1\wedge\theta^4
+
2\Baux_2 \theta^3\wedge\theta^4.\label{structure-equations-GN-2009}
\end{gather}

Moreover, two equations $y''' = H(x, y, y', y'')$ and
$\overline{y}''' = \overline{H}\big( \overline{x},
\overline{y}, \overline{y}', \overline{y}' \big)$ are locally
point equivalent if and only if there exists a local
bundle isomorphism $\Phi \colon P_7 \overset{\sim}{\longrightarrow}
\overline{P}_7$ between the corresponding bundles
$P_7 \longrightarrow J_4$ and
$\overline{P}_7 \longrightarrow
\overline{J}_4$ satisfying
\begin{gather*}
\Phi^\ast \overline{\theta}^\mu =
\theta^\mu
\qquad
\text{and}
\qquad
\Phi^\ast \overline{\Omega}_i
 =
\Omega_i, \qquad
 \mu = 1, 2, 3, 4,\quad
i = 1, 2, 3 .
\end{gather*}

Exactly $3$ $($boxed$)$ invariants are primary: $\Aaux_1$,
$\Baux_1$, $\Caux_1$, while others express in terms
of them and their covariant derivatives. Point equivalence
to $\overline{y}''' = 0$ is characterized by $0 \equiv \Aaux_1
\equiv \Baux_1 \equiv \Caux_1$.
Two relevant explicit expressions are
\begin{gather}
\Aaux_1
 =
\frac{1}{54}
\frac{{\sf u}_3^3}{{\sf u}_1^3}
\Waux,
\tag{\Waux \ {in} \  (\ref{explicit-Wunschmann-W})}
\\
\Caux_1 =
\frac{1}{54}
\frac{{\sf u}_3}{{\sf u}_1^2}
\left(
\Caux
+
\frac{1}{27}
\Waux_q
\right).
\tag{\Caux \ {in} \ (\ref{explicit-Cartan-C})}
\end{gather}

The seven $1$-forms $\big(\theta^1, \theta^2, \theta^3, \theta^4,
\Omega_1, \Omega_2, \Omega_3 \big)$ set up a Cartan connection
$\widehat{\omega}$ on $P_7$ via
\[
\widehat{\omega}
 :=
\left(
\begin{matrix}
\frac{1}{2}\Omega_1 & \frac{1}{2}\Omega_2 & 0 & 0
\vspace{1mm}\\
-\theta^2 & \Omega_3-\frac{1}{2}\Omega_1 & 0 & 0
\vspace{1mm}\\
\theta^3 & -\theta^4 & \frac{1}{2}\Omega_1-\Omega_3 &
-\frac{1}{2}\Omega_2
\vspace{1mm}\\
2\theta^1 & \theta^3 & \theta^2 & -\frac{1}{2}\Omega_1
\end{matrix}
\right),
\]
and the structure equations~\eqref{structure-equations-GN-2009}
are just the equations for the curvature $\widehat{K}$ of this
connection
\begin{gather*}
{\rm d}\widehat{\omega}
+
\widehat{\omega}\wedge\widehat{\omega}
 =:
\widehat{K}.
\end{gather*}
\end{Theorem}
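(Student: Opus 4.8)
The plan is to run \'Elie Cartan's method of equivalence on the $G$-structure determined by the coframe~\eqref{initial-coframe-3-rd-ODE}. First I would justify the shape of the structure group in~\eqref{initial-G-structure-3-rd-ODE}: a point transformation $(x,y)\mapsto(\overline{x},\overline{y})$ prolongs canonically to $J_4$, and the prolongation formulas force the contact ideal $\langle\omega^1\rangle$ and the subideal $\langle\omega^1,\omega^3\rangle$ to be preserved. Reading off how $\omega^1,\omega^2,\omega^3,\omega^4$ recombine under the prolonged map yields exactly the lower-triangular block pattern with entries ${\sf u}_1,\dots,{\sf u}_8$. This presents the point-equivalence problem for the ODE~\eqref{y-third-H-x-y-etc} as the equivalence problem for the $8$-parameter $G$-structure $(\omega^\mu, G)$ on $J_4$, which is the object to which Cartan's algorithm applies.

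Next I would lift to the $12$-dimensional bundle $J_4\times G$, introduce the tautological forms $\theta^\mu={\sf u}^\mu{}_\nu\,\omega^\nu$, and compute the differentials, which take the form ${\rm d}\theta^\mu=-\pi^\mu{}_\nu\wedge\theta^\nu+(\text{torsion})$, where the $\pi^\mu{}_\nu$ are the pullbacks of the Maurer--Cartan forms of $G$. The heart of the argument is the absorption--normalization loop: I would absorb all inessential torsion into redefinitions of the $\pi^\mu{}_\nu$, then set the surviving \emph{essential} torsion coefficients to constants. Each normalization solves for one group parameter in terms of the others and of jets of $H$; the assertion is that precisely five of the eight parameters are eliminated, leaving ${\sf u}_3,{\sf u}_5,{\sf u}_7$ free. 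This carves out the $7$-dimensional bundle $P_7\to J_4$, and the three independent modified Maurer--Cartan forms that survive become $\Omega_1,\Omega_2,\Omega_3$, completing the canonical coframe $\{\theta^1,\theta^2,\theta^3,\theta^4,\Omega_1,\Omega_2,\Omega_3\}$.

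With the reduction fixed, I would recompute ${\rm d}\theta^\mu$ and ${\rm d}\Omega_i$ on $P_7$ and check that they assume precisely the shape~\eqref{structure-equations-GN-2009}; the three coefficients that cannot be normalized away, $\Aaux_1$, $\Baux_1$, $\Caux_1$, are then the primary (boxed) invariants. That the secondary coefficients $\Aaux_2,\Aaux_3,\Baux_2,\Baux_3,\Baux_4,\Eaux_1,\Eaux_2$ are expressible through $\Aaux_1,\Baux_1,\Caux_1$ and their covariant derivatives follows by imposing the integrability conditions ${\rm d}^2\theta^\mu=0={\rm d}^2\Omega_i$: differentiating~\eqref{structure-equations-GN-2009} and collecting the independent $3$-forms yields the Bianchi relations, which are linear in the secondaries and solve for them. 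The local equivalence statement is the standard corollary of having produced an $\{e\}$-structure, i.e.\ a canonical absolute parallelism by seven pointwise-independent forms: a bundle isomorphism $\Phi$ matching all seven forms exists if and only if the ODEs are point equivalent, since the pullback of a coframe obeying an identical closed structure system integrates, by the Frobenius/graph argument, to such a map.

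Finally I would dispatch the remaining assertions. Substituting $H\equiv0$ makes all torsion vanish, so $0\equiv\Aaux_1\equiv\Baux_1\equiv\Caux_1$ and $P_7$ carries the Maurer--Cartan form of the flat, maximally symmetric model; conversely, vanishing of the three primary invariants forces all secondaries to vanish through the Bianchi relations above, whence~\eqref{structure-equations-GN-2009} reduces to the constant-coefficient equations of that model and Lie's third theorem gives the local isomorphism to $\overline{y}'''=0$. To recover~\eqref{explicit-Wunschmann-W}--\eqref{explicit-Cartan-C}, I would carry the normalizations out explicitly on a jet of $H$ and read off $\Aaux_1=\frac{1}{54}\frac{{\sf u}_3^3}{{\sf u}_1^3}\,\Waux$ and $\Caux_1=\frac{1}{54}\frac{{\sf u}_3}{{\sf u}_1^2}\bigl(\Caux+\frac{1}{27}\Waux_q\bigr)$ directly. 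The Cartan-connection reformulation is then checked by assembling the seven forms into the displayed trace-free $4\times4$ matrix $\widehat{\omega}$ and expanding ${\rm d}\widehat{\omega}+\widehat{\omega}\wedge\widehat{\omega}$ block by block, which reproduces~\eqref{structure-equations-GN-2009} and exhibits $\widehat{K}$ as a curvature governed by $\Aaux_1,\Baux_1,\Caux_1$. I expect the main obstacle to be the absorption--normalization bookkeeping: at each stage one must decide which torsion coefficients are genuinely essential (invariant under the residual group), choose normalizations that are simultaneously consistent and yield the exact coefficient combinations of~\eqref{structure-equations-GN-2009}, and verify that no sixth parameter can be fixed, so that the fiber is \emph{exactly} $3$-dimensional; the subsequent symbolic matching to $\Waux$ and $\Caux$ is only as hard as the bulk of the computation, not conceptually.
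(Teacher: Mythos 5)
Your plan is the standard Cartan equivalence-method reduction, which is exactly the route the paper takes: it does not prove Theorem~\ref{Theorem-Cartan-connection-3-rd-ODE} itself but cites \cite{Cartan-1941,Godlinski-2008,Godlinski-Nurowski-2009} and merely records, as you do, that the eight group parameters ${\sf u}_1,\dots,{\sf u}_8$ of the $G$-structure~\eqref{initial-G-structure-3-rd-ODE} are reduced to ${\sf u}_3,{\sf u}_5,{\sf u}_7$ by forcing the differentials of the $\theta^\mu$ to assume the shape~\eqref{structure-equations-GN-2009}. Your outline (absorption--normalization, Bianchi identities expressing the secondary invariants, the $\{e\}$-structure equivalence argument, and the block-by-block verification of ${\rm d}\widehat{\omega}+\widehat{\omega}\wedge\widehat{\omega}=\widehat{K}$) is a faithful expansion of that same construction.
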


Now, the structure equations~(\ref{structure-equations-GN-2009})
guarantee that the bundle $P_7$ is foliated by a $4$-dimensional
distribution annihilating the three $1$-forms
$\big( \theta^1, \theta^3, \theta^4 \big)$,
and that the leaf space~$M_3$ of this foliation is equipped
with a natural Weyl geometry, if and only if two among three
primary invariants vanish identically
\[
0 \equiv \Aaux_1(H) \equiv\Caux_1(H).
\]

A representative $(g, A)$ of the concerned Weyl class $[ (g, A)]$ on $M_3$ has then the signature~$(2,1)$ symmetric bilinear form
\[
g :=\theta^3 \theta^3+\theta^1 \theta^4+\theta^4 \theta^1,
\]
which is obtained as the determinant of the
lower-left $2 \times 2$ submatrix of
the connection mat\-rix~$\widehat{\omega}$,
while the $1$-form is defined as
\[
A :=\Omega_3.
\]
It is thanks to the hypothesis $\Aaux_1 \equiv 0 \equiv \Caux_1$
that $g$ and $A$, originally defined on $P_7$, descend on~$M_3$.

Furthermore, according to a result of Cartan in~\cite{Cartan-1943},
any such Weyl geometry $[(g, A)]$
defined on such a leaf space~$M_3$ is {\em automatically
Einstein--Weyl!}

We stress that given $H = H(x, y, p, q)$ satisfying
$\Aaux_1 \equiv 0 \equiv \Caux_1$, or equivalently
\[
\Waux(H)
 \equiv
0
 \equiv
\Caux(H),
\]
one can in principle set up {\em explicit} formulas for the corresponding forms $\theta^1$, $\theta^3$, $\theta^4$, $\Omega_3$ on $P_7$, and this in turn can provide {\em explicit formulas} for $(g, A)$ on $M_3$. However, one substantial obstacle is

\begin{Question}How to solve $\Waux(H) \equiv 0 \equiv \Caux(H)$?
\end{Question}

\section[PDE on the plane $z_y = F(x,y,z,z_x)$ modulo point transformations]{PDE on the plane $\boldsymbol{z_y = F(x,y,z,z_x)}$\\ modulo point transformations}\label{PDE-z_y-F}

We recall that in~\cite{Hill-Nurowski-2010},
it was shown that the equivalence problem for
3\textsuperscript{rd}-order ODEs considered
modulo point transformations of variables
is in one-to-one correspondence
with the equivalence problem
for $4$-dimensional para-CR structures of type
$(1, 1, 2)$, cf.\ also~\cite{Merker-Nurowski-2020,Merker-Pocchiola-2018}.
This thus suggests a new approach for constructing
Lorentzian Einstein--Weyl structures via para-CR structures
of type $(1, 1, 2)$.
Instead of working with general
para-CR structures of type $(1, 1, 2)$, we will concentrate
on a subclass determined in the following way.

We start with a class of PDEs of the form
\[
z_y =F(x,y,z,z_x),
\]
considered modulo point transformations,
for an unknown function $z = z(x,y)$.
We then ask when this class defines a
para-CR structure of type $(1, 1, 2)$.

To answer this (in Proposition~\ref{Prp-answer}),
we need a little preparation. Using the abbreviation
$z_x =: p$, we indeed
consider such PDEs modulo point transformations
of variables
\[
(x,y,z) \longmapsto (\overline{x},\overline{y},\overline{z})
 =
\big(
\overline{x}(x,y,z),
\overline{y}(x,y,z),
\overline{z}(x,y,z)
\big).
\]
This leads to an equivalence problem for the four $1$-forms
\[
\omega_0^1 :={\rm d}z-p\,{\rm d}x-F(x,y,z,p)\,{\rm d}y,\qquad
\omega_0^2 :={\rm d}p,\qquad
\omega_0^3 :={\rm d}x,\qquad
\omega_0^4 :={\rm d}y,
\]
given up to transformations
\begin{gather}\label{omega-zero-4-4-matrix}
\left(
\begin{matrix}
\omega_0^1
\\
\omega_0^2
\\
\omega_0^3
\\
\omega_0^4
\end{matrix}
\right)
 \longmapsto
\left(
\begin{matrix}
{\sf u}_1 & 0 & 0 & 0
\\
{\sf u}_2 & {\sf u}_3 & 0 & 0
\\
{\sf u}_4 & 0 & {\sf u}_5 & {\sf u}_6
\\
{\sf u}_7 & 0 & {\sf u}_8 & {\sf u}_9
\end{matrix}
 \right)
\left(
\begin{matrix}
\omega_0^1
\\
\omega_0^2
\\
\omega_0^3
\\
\omega_0^4
\end{matrix}
 \right).
\end{gather}
Within this coframe $\big\{ \omega_0^1, \omega_0^2, \omega_0^3,
\omega_0^4 \big\}$, in terms of the two operators
\[
D
 :=
\partial_x
+
p \partial_z
\qquad
\text{and}
\qquad
\Delta
 :=
\partial_y
+
F \partial_z,
\]
the exterior differential of any function $F = F(x, y, z, p)$
can be rewritten as
\[
{\rm d}F
 =
F_z \omega_0^1
+
F_p \omega_0^2
+
DF \omega_0^3
+
\Delta F \omega_0^4.
\]

\begin{Proposition}\label{Prp-answer}
The coframe of $1$-forms $\big\{ \omega_0^1, \omega_0^2, \omega_0^3,
\omega_0^4 \big\}$ modulo transformations~\eqref{omega-zero-4-4-matrix}
defines a para-CR structure of type $(1, 1, 2)$ if and only if
\begin{gather*}%\label{integrability-DF-zero}
0 \equiv DF =F_x+p F_z.
\end{gather*}
\end{Proposition}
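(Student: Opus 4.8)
The plan is to read the integrability of the canonically distinguished distributions directly off the structure equations of the coframe $\{\omega_0^1,\omega_0^2,\omega_0^3,\omega_0^4\}$, and to show that the single obstruction preventing these equations from closing up into the normal form of a para-CR structure of type $(1,1,2)$ is exactly the coefficient $DF$.

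First I would record the flag singled out by the transformation matrix~\eqref{omega-zero-4-4-matrix}. Its block-triangular shape preserves the coflag $\langle\omega_0^1\rangle \subset \langle\omega_0^1,\omega_0^2\rangle \subset \langle\omega_0^1,\omega_0^3,\omega_0^4\rangle$, hence dually the line field $L := \langle\partial_p\rangle$ (annihilated by $\omega_0^1,\omega_0^3,\omega_0^4$), the rank-two distribution $\mathcal{H} := \langle D,\Delta\rangle$ (annihilated by $\omega_0^1,\omega_0^2$), and the rank-three contact distribution $\mathcal{E} := \langle\partial_p,D,\Delta\rangle = L\oplus\mathcal{H}$ (annihilated by $\omega_0^1$). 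This is precisely the $(1,1,2)$ flag: the line $L$, the codimension-one $\mathcal{E}$, and the two-plane $\mathcal{H}$. By the characterization of such structures in~\cite{Hill-Nurowski-2010,Merker-2008}, the coframe modulo~\eqref{omega-zero-4-4-matrix} defines a para-CR structure of type $(1,1,2)$ if and only if (i) the two-plane $\mathcal{H}$ is Frobenius-integrable and (ii) the induced Levi form on $\mathcal{E}$ is non-degenerate; the line field $L$ is integrable for free.

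The heart of the matter is a single exterior-derivative computation, made immediate by the excerpt's expansion ${\rm d}F = F_z\,\omega_0^1 + F_p\,\omega_0^2 + (DF)\,\omega_0^3 + (\Delta F)\,\omega_0^4$. Since $\omega_0^1 = {\rm d}z - p\,{\rm d}x - F\,{\rm d}y$ gives ${\rm d}\omega_0^1 = -\,{\rm d}p\wedge{\rm d}x - {\rm d}F\wedge{\rm d}y = -\,\omega_0^2\wedge\omega_0^3 - {\rm d}F\wedge\omega_0^4$, substituting the expansion of ${\rm d}F$ and using $\omega_0^4\wedge\omega_0^4 = 0$ yields
\[
{\rm d}\omega_0^1 = -\,\omega_0^2\wedge\omega_0^3 - F_z\,\omega_0^1\wedge\omega_0^4 - F_p\,\omega_0^2\wedge\omega_0^4 - (DF)\,\omega_0^3\wedge\omega_0^4,
\]
while ${\rm d}\omega_0^2 = {\rm d}\omega_0^3 = {\rm d}\omega_0^4 = 0$. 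Frobenius-integrability of $\mathcal{H}$ means ${\rm d}\omega_0^1,{\rm d}\omega_0^2$ lie in the ideal generated by $\omega_0^1,\omega_0^2$; as ${\rm d}\omega_0^2 = 0$ and every term of ${\rm d}\omega_0^1$ save $-(DF)\,\omega_0^3\wedge\omega_0^4$ already carries a factor $\omega_0^1$ or $\omega_0^2$, this holds exactly when $DF = 0$. Equivalently ${\rm d}\omega_0^1(D,\Delta) = -DF$, mirroring the bracket identity $[D,\Delta] = (DF)\,\partial_z$.

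It remains to dispatch the non-degeneracy~(ii), which I expect to be the only place calling for an argument rather than a computation. The term $-\,\omega_0^2\wedge\omega_0^3$ in ${\rm d}\omega_0^1$ never vanishes, so $\omega_0^1\wedge{\rm d}\omega_0^1 \neq 0$ and $\mathcal{E}$ is genuinely contact; concretely $[\partial_p,D] = \partial_z \notin \mathcal{E}$, so the Levi pairing of $L$ against $\mathcal{H}$ is non-degenerate for \emph{every} $F$, independently of $DF$. Thus~(ii) imposes no condition, and assuming $DF = 0$ both (i) and (ii) hold, producing a para-CR structure of type $(1,1,2)$; conversely any such structure forces~(i), i.e.\ $DF = 0$. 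The genuine subtlety is therefore not the computation but confirming that no stronger integrability slips in: one must note that the \emph{too large} distribution $\mathcal{E}$ is perpetually non-integrable (obstructed by the ever-present $\omega_0^2\wedge\omega_0^3$), so that $\mathcal{H}$-integrability alone is the correct and only constraint, neither over- nor under-determining $F$.
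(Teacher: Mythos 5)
Your proof is correct and follows essentially the same route as the paper: the paper's entire proof is the single computation $0 = {\rm d}\omega_0^1\wedge\omega_0^1\wedge\omega_0^2 = -DF\,\omega_0^1\wedge\omega_0^2\wedge\omega_0^3\wedge\omega_0^4$, which is exactly your Frobenius-integrability condition for $\mathcal{H}=\langle D,\Delta\rangle$ written as an exterior-algebra identity. Your extra verification that the Levi form is automatically non-degenerate (via the ever-present $-\omega_0^2\wedge\omega_0^3$ term) only makes explicit what the paper leaves implicit in the phrase ``the only nontrivial integrability condition.''
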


\begin{proof}The only nontrivial integrability condition required to constitute a
true para-CR structure comes from
\begin{gather*}
0
 =
{\rm d}\omega_0^1
\wedge
\omega_0^1
\wedge
\omega_0^2
 =- DF\omega_0^1\wedge\omega_0^2\wedge\omega_0^3\wedge\omega_0^4.\tag*{\qed}
\end{gather*}\renewcommand{\qed}{}
\end{proof}

We will now show that for this class of para-CR structures
there is an amazing coincidence between its main invariant,
which will happen to be the Monge invariant with respect to~$p$,
and the classical W\"unschmann invariant
of the corresponding class of 3\textsuperscript{rd}
order ODEs modulo point transformations.

From now on, we will only consider PDEs $z_y = F(x,y,z,z_x)$ satisfying
$DF \equiv 0$. Furthermore, we will also assume that another
point-invariant condition holds
\[
0 \neq F_{pp} \qquad (\text{\rm everywhere}).
\]

Cartan's process leads one to choose more convenient representatives of these forms
\begin{gather*}
\omega^1 :=\omega_0^1,\\
\omega^2 :=\omega_0^2-\frac{\Delta F_{ppp}F_{pp}-\Delta F_{pp}F_{ppp}+3 F_p F_{pp} F_{zpp}
-3 F_{pp}^2 F_{zp}-2 F_p F_{ppp} F_{zp}}{6 F_{pp}^3}\omega_0^1,\\
\omega^3 :=\omega_0^3+F_p \omega_0^4-\frac{1}{3} \frac{F_{ppp}}{F_{pp}}\omega_0^1,\\
\omega^4 :=F_{pp} \omega_0^4+\frac{4 F_{ppp}^2-3 F_{pp} F_{pppp}}{18 F_{pp}^2}\omega_0^1,
\end{gather*}
and we will use this choice in the sequel.

Using Cartan's method, it is then straightforward to solve the equivalence problem for point equivalence classes of such PDEs $z_y = F(x,y,z,z_x)$. The solution is summarized in the following

\begin{Theorem}\label{Theorem-Cartan-bundle-z-y-F}
A PDE system $z_y = F(x,y,z,z_x)$ satisfying the two point-invariant
conditions
\[
DF
 \equiv
0
 \neq
F_{z_xz_x},
\]
with its associated $1$-forms
$\omega^1$, $\omega^2$,
$\omega^3$, $\omega^4$ as above,
uniquely defines a $7$-dimensional principal $H_3$-bundle
$H_3 \longrightarrow P_7 \longrightarrow J_4$
over the space of first jets $J_4 \ni (x, y, z, p)$
with the $($reduced$)$ structure group $H_3$ consisting of matrices
\[
\left(
\begin{matrix}
{\sf u}_3{\sf u}_5 & 0 & 0 & 0
\\
0 & {\sf u}_3 & 0 & 0
\\
-{\sf u}_3{\sf u}_8 & 0 & {\sf u}_5 & 0
\\
-\dfrac{{\sf u}_3{\sf u}_8^2}{2 {\sf u}_5} & 0 & {\sf u}_8 &
\dfrac{{\sf u}_5}{{\sf u}_3}
\end{matrix}
\right),
\qquad
{\sf u}_3 \in \mathbb{R}^\ast,\quad
{\sf u}_5 \in \mathbb{R}^\ast,\quad {\sf u}_8 \in \mathbb{R},
\]
together with a unique coframe $\big\{ \theta^1, \theta^2, \theta^3,
\theta^4, \Omega_1, \Omega_2, \Omega_3 \big\}$ on $P_7$ where
\[
\left(
\begin{matrix}
\theta^1
\\
\theta^2
\\
\theta^3
\\
\theta^4
\end{matrix}
\right)
 :=
\left(
\begin{matrix}
{\sf u}_3{\sf u}_5 & 0 & 0 & 0
\\
0 & {\sf u}_3 & 0 & 0
\\
-{\sf u}_3{\sf u}_8 & 0 & {\sf u}_5 & 0
\\
-\dfrac{{\sf u}_3{\sf u}_8^2}{2 {\sf u}_5} & 0 & {\sf u}_8 &
\dfrac{{\sf u}_5}{{\sf u}_3}
\end{matrix}
 \right)
\left(
\begin{matrix}
\omega^1
\\
\omega^2
\\
\omega^3
\\
\omega^4
\end{matrix}
 \right),
\]
such that the coframe
enjoys precisely the structure
equations~\eqref{structure-equations-GN-2009}.
This time however, the curvature invariants
$\Aaux_1$, $\Aaux_2$, $\Aaux_3$,
$\Baux_1$, $\Baux_2$, $\Baux_3$, $\Baux_4$,
$\Caux_1$, $\Caux_2$, $\Caux_3$,
$\Eaux_1$, $\Eaux_2$
depend on $F = F(x,y,z,p)$ and its derivatives up to order~$6$.

Two relevant explicit expressions are
\[
\Aaux_1 =-\frac{1}{54}\frac{1}{{\sf u}_3^3}\frac{\Maux}{F_{pp}^3},
\qquad
\Caux_1 =\frac{1}{3}\frac{1}{{\sf u}_3^2 {\sf u}_5}
\frac{\Kaux}{F_{pp}^5},
\]
where
\begin{gather*}
\Maux :=9 F_{ppppp} F_{pp}^2-45 F_{pppp} F_{ppp} F_{pp}+40 F_{ppp}^3,\\
\Kaux :=\Delta F_{ppppp} F_{pp}^3-5 \Delta F_{pppp} F_{pp}^2 F_{ppp}
+
12 \Delta F_{ppp}F_{pp}F_{ppp}^2
-
12 \Delta F_{pp}F_{ppp}^3
-
4 \Delta F_{ppp}F_{pp}^2F_{pppp}\\
\hphantom{\Kaux :=}{}
+
9 \Delta F_{pp}F_{pp}F_{ppp}F_{pppp}
-
\Delta F_{pp}F_{pp}^2F_{ppppp}
+
5 F_p F_{pp}^3F_{ppppz}
+
6 F_{pp}^4F_{pppz}\\
\hphantom{\Kaux :=}{}
-
20 F_pF_{pp}^2F_{ppp}F_{pppz}
-
12 F_{pp}^3F_{ppp}F_{ppz}
+
36 F_pF_{pp}F_{ppp}^2F_{ppz}
-
12 F_pF_{pp}^2F_{pppp}F_{ppz}\\
\hphantom{\Kaux :=}{}
+
8 F_{pp}^2F_{ppp}^2F_{pz}
-
24 F_pF_{ppp}^3F_{pz}
-
3 F_{pp}^3F_{pppp}F_{pz}
+
18 F_pF_{pp}F_{ppp}F_{pppp}F_{pz}\\
\hphantom{\Kaux :=}{}
-
2 F_pF_{pp}^2F_{ppppp}F_{pz}.
\end{gather*}

Two equations $z_y = F(x,y,z,z_x)$ and $\overline{z}_{\overline{y}}
= \overline{F} \big( \overline{x}, \overline{y}, \overline{z},
\overline{z}_{\overline{x}} \big)$ satisfying $DF = 0 \neq
F_{z_xz_x}$ and $\overline{D} \overline{F} \equiv 0 \neq
\overline{F}_{ \overline{z}_{\overline{x}}
\overline{z}_{\overline{x}}}$ are locally point equivalent
if and only if there exists a bundle isomorphism
$\Phi \colon P_7 \overset{\sim}{\longrightarrow}
\overline{P}_7$ between the corresponding principal bundles
$H_3 \longrightarrow P_7 \longrightarrow J_4$ and
$\overline{H}_3 \longrightarrow \overline{P}_7 \longrightarrow
\overline{J}_4$ satisfying
\begin{gather*}
\Phi^\ast \overline{\theta}^\mu =\theta^\mu
\qquad
\text{and}
\qquad
\Phi^\ast \overline{\Omega}_i
 =
\Omega_i,
\qquad
\mu = 1, 2, 3, 4,\quad i = 1, 2, 3.
\end{gather*}
\end{Theorem}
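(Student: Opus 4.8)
The plan is to solve the point-equivalence problem directly by Cartan's method of equivalence, starting from the adapted coframe $\{\omega^1,\omega^2,\omega^3,\omega^4\}$ displayed before the statement together with its nine-parameter structure group \eqref{omega-zero-4-4-matrix}, and imposing the para-CR integrability $DF\equiv 0$ of Proposition~\ref{Prp-answer} at every stage. First I would compute the four exterior derivatives ${\rm d}\omega^\mu$, using the expansion ${\rm d}F = F_z\,\omega_0^1 + F_p\,\omega_0^2 + DF\,\omega_0^3 + \Delta F\,\omega_0^4$ so that $DF\equiv 0$ annihilates the $\omega_0^3$-component of ${\rm d}F$ throughout. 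Lifting to the total space by $\theta^\mu = {\sf g}^\mu_{\ \nu}\,\omega^\nu$, with ${\sf g}$ ranging over the group, each ${\rm d}\theta^\mu$ splits as $\pi^\mu_{\ \nu}\wedge\theta^\nu$ plus semibasic torsion, where the $\pi^\mu_{\ \nu}$ are the Maurer--Cartan forms of the group expressed through ${\rm d}{\sf u}_1,\dots,{\rm d}{\sf u}_9$.

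Next I would absorb and normalize. Those torsion coefficients that can be modified by redefining the $\pi^\mu_{\ \nu}$ are absorbed; the remaining (essential) torsion consists of functions of $F$ and its derivatives, and setting the normalizable ones to the same numerical constants that appear in \eqref{structure-equations-GN-2009} pins down six successive group reductions, cutting the nine parameters down to the three ${\sf u}_3, {\sf u}_5, {\sf u}_8$ of $H_3$. Here $F_{pp}\neq 0$ is exactly what licenses the divisions by $F_{pp}$ at the normalization steps, while $DF\equiv 0$ is what forces the obstructing torsion components to vanish so that the reduction closes on the three-dimensional stabilizer rather than prolonging further. After these reductions the residual connection forms are uniquely determined; I would rename the appropriate combinations $\Omega_1,\Omega_2,\Omega_3$ and verify by direct substitution that $\{\theta^1,\dots,\theta^4,\Omega_1,\Omega_2,\Omega_3\}$ obeys precisely the exterior differential system \eqref{structure-equations-GN-2009}, with $\Aaux_1,\Baux_1,\Caux_1$ occupying the three boxed slots.

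With the structure equations in hand, the two explicit invariants are read off from the surviving torsion: the coefficient in ${\rm d}\theta^4$ yields $\Aaux_1$, and tracking its ${\sf u}_3$-weight together with the Monge combination $\Maux$ gives $\Aaux_1 = -\tfrac{1}{54}\,{\sf u}_3^{-3}\,\Maux/F_{pp}^3$; similarly the coefficient of $\theta^2\wedge\theta^3$ in ${\rm d}\Omega_2$ produces $\Caux_1 = \tfrac{1}{3}\,({\sf u}_3^2{\sf u}_5)^{-1}\,\Kaux/F_{pp}^5$ once $\Kaux$ is expanded in mixed $\Delta$- and $z$-derivatives of $F$. The local equivalence criterion is then the standard endgame: the seven forms constitute a canonical $\{e\}$-structure on $P_7$, so any point transformation of the base $(x,y,z)$ lifts to a bundle isomorphism $\Phi$ with $\Phi^\ast\overline{\theta}^\mu = \theta^\mu$ and $\Phi^\ast\overline{\Omega}_i = \Omega_i$, and conversely any such $\Phi$ projects to a point equivalence of the two PDEs; since $H_3$ is already three-dimensional and the parallelism is closed, no further prolongation is required.

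I expect the main obstacle to be twofold. The first, more delicate point is to match the PDE normalization to the ODE one so that the two structure equations become \emph{literally identical} and not merely isomorphic: this is what makes the already-chosen intermediate representatives $\omega^2,\omega^3,\omega^4$ indispensable, since any other admissible choice would reshuffle the invariants and destroy the coincidence with \eqref{structure-equations-GN-2009}. The second and heavier obstacle is the sheer symbolic computation of $\Kaux$, a polynomial of the same weight as $\Caux$ on the ODE side involving derivatives of $F$ up to order six and the operator $\Delta = \partial_y + F\,\partial_z$; establishing that the W\"unschmann--Cartan pair $(\Waux,\Caux)$ transfers \emph{exactly} to the Monge--$\Kaux$ pair $(\Maux,\Kaux)$ is precisely where the claimed ``magic'' --- and the bulk of the real work --- resides.
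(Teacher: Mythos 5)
Your proposal is correct and follows essentially the same route as the paper, which itself only asserts that ``using Cartan's method, it is then straightforward to solve the equivalence problem'' for the adapted coframe $\{\omega^1,\omega^2,\omega^3,\omega^4\}$: you correctly identify the absorption--normalization cascade reducing the nine group parameters of~\eqref{omega-zero-4-4-matrix} to the three-dimensional stabilizer $H_3$, the matching of the resulting $\{e\}$-structure to the ODE structure equations~\eqref{structure-equations-GN-2009}, the reading-off of $\Aaux_1$ and $\Caux_1$ from the torsion of ${\rm d}\theta^4$ and ${\rm d}\Omega_2$, and the standard equivalence-of-parallelisms endgame. The only caveat is that, like the paper, you defer the genuinely laborious part --- the explicit symbolic computation producing $\Maux$ and $\Kaux$ --- to machine calculation, which is where the actual content of the theorem is verified.
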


This theorem enables one to think about the geometry of a PDE $z_y =
F(x,y,z,z_x)$ with $DF \equiv 0 \neq F_{z_xz_x}$, considered modulo
point transformations of variables, as the geometry of a~certain
3\textsuperscript{rd} order ODE $y''' = H(x,y,y',y'')$, also
considered modulo point transformations. In particular, one can ask
how big is the subclass of point nonequivalent 3\textsuperscript{rd}
order ODEs which are related to PDEs $z_y = F(x,y,z,z_x)$ with $DF
\equiv 0 \neq F_{z_xz_x}$.

We will not answer this question in this paper. Instead, we
concentrate on the Einstein--Weyl geometric aspect of the above
observation.

Since the EDS staying behind the PDEs $z_y = F(x,y,z,z_x)$ with $DF
\equiv 0 \neq F_{z_xz_x}$ is vi\-sib\-ly the same as the EDS for
3\textsuperscript{rd} order ODEs $y''' = H(x,y,y',y'')$, one can look
for PDEs $z_y = F(x,y,z,z_x)$ with $DF \equiv 0 \neq F_{z_xz_x}$,
which in addition satisfy $A_1=C_1=0$, and build a~corresponding
Einstein Weyl geometry, not in terms of $H(x,y,y',y'')$ satisfying
$\Waux(H)\equiv\Caux(H)\equiv 0$, but in terms of the function
$F(x,y,z,z_x)$ satisfying $DF \equiv \Maux(F)\equiv\Kaux(F)\equiv
0$. If only $\Maux(F) \equiv 0$, there exists a conformal Lorentzian
metric on the leaf space of the integrable distribution in $P_7$
annihilated by $\big\{ \theta^1, \theta^3, \theta^4 \big\}$, and when
moreover $\Kaux(F) \equiv 0$, all this produces Einstein--Weyl
geometries. Actually, we gain the following

\begin{Theorem}
\label{Theorem-Wey-g-A-from-ODE}
A PDE $z_y = F(x,y,z,z_x)$ with $DF \equiv 0 \neq F_{z_xz_x}$ defines a bilinear form
$\widetilde{g}$ of signature $(+, +, -, 0$,
$0, 0, 0)$ on the bundle $P_7
\ni (x,y,z,p, u_3, u_5, u_8)$:
\begin{gather*}
\widetilde{g}
 =
\theta^3 \theta^3+\theta^1 \theta^4+\theta^4 \theta^1
 =
\frac{{\sf u}_5^2}{9 F_{pp}^2}
\Big\{
\big(
3 F_{pp} [{\rm d}x+F_p\,{\rm d}y ]
-
F_{ppp} [{\rm d}z-p\,{\rm d}x-F \,{\rm d}y ]
\big)^2
\\
\hphantom{\widetilde{g}=}{}
+
 ({\rm d}z-p\,{\rm d}x-F\,{\rm d}y )
\big(
18 F_{pp}^3 {\rm d}y
+
[4 F_{ppp}^2-3 F_{pp} F_{pppp}]
[
{\rm d}z-p\,{\rm d}x-F\,{\rm d}y
]
\big)
\Big\},
\end{gather*}
degenerate along the rank $4$ integrable distribution
$\mathcal{D}_4$
which is
the annihilator of $\theta^1$, $\theta^3$,
$\theta^4$.

The PDE $z_y = F(x,y,z,z_x)$ with $DF \equiv 0 \neq F_{z_xz_x}$ also defines the $1$-form
\[
\Omega_3
 :=
r_x\,{\rm d}x
+
r_y\,{\rm d}y
+
r_z\,{\rm d}z
+
\tfrac{1}{3}
{\rm d} \big[
\log
\big(
u_5^3
F_{pp}
\big)
\big],
\]
where
\begin{gather*}
r_x =
\frac{1}{3 F_{pp}^4}
\big\{
\Delta F_{ppp}F_{pp}^2
-
\Delta F_{pp}F_{pp}F_{ppp}
+
3 F_pF_{pp}^2F_{ppz}
-
F_{pp}^3F_{pz}
-
2 F_pF_{pp}F_{ppp}F_{pz}
\\
\hphantom{r_x =}{}-
\Delta F_{pppp}F_{pp}^2p
+
3 \Delta F_{ppp}F_{pp}F_{ppp}p
-
3 \Delta F_{pp}F_{ppp}^2 p
+
\Delta F_{pp}F_{pp}F_{pppp} p
\\
\hphantom{r_x =}{}-
4 F_pF_{pp}^2F_{pppz}p
-
2 F_{pp}^3F_{ppz} p
+
9 F_pF_{pp}F_{ppp}F_{ppz}p
+
F_{pp}^2F_{ppp}F_{pz} p
\\
\hphantom{r_x =}{} -
6 F_pF_{ppp}^2F_{pz}p
+
2 F_pF_{pp}F_{pppp}F_{pz}p
\big\},
\\
r_y =
\frac{1}{3 F_{pp}^4}
\big\{
{-}
\Delta F_{pppp}FF_{pp}^2
+
\Delta F_{ppp}F_pF_{pp}^2
-
\Delta F_{pp}F_{pp}^3
+
3 \Delta F_{ppp}F F_{pp}F_{ppp}
\\
\hphantom{r_y =}{}-
\Delta F_{pp}F_pF_{pp}F_{ppp}
-
3 \Delta F_{pp}FF_{ppp}^2
+
\Delta F_{pp}FF_{pp}F_{pppp}
-
4 FF_pF_{pp}^2F_{pppz}
\\
\hphantom{r_y =}{}+
3 F_p^2F_{pp}^2F_{ppz}
-
2 FF_{pp}^3F_{ppz}
+
9 FF_pF_{pp}F_{ppp}F_{ppz}
-
3 F_pF_{pp}^3F_{pz}
\\
\hphantom{r_y =}{}-
2 F_p^2F_{pp}F_{ppp}F_{pz}
+
FF_{pp}^2F_{ppp}F_{pz}
-
6 FF_pF_{ppp}^2F_{pz}
+
2 FF_pF_{pp}F_{pppp}F_{pz}
+
3 F_{pp}^4F_z
\big\},
\\
r_z =
\frac{1}{3 F_{pp}^4}
\big\{
\Delta F_{pppp}F_{pp}^2
-
3 \Delta F_{ppp}F_{pp}F_{ppp}
+
3 \Delta F_{pp}F_{ppp}^2
-
\Delta F_{pp}F_{pp}F_{pppp}+
4 F_pF_{pp}^2F_{pppz}
\\
\hphantom{r_z =}{}
+
2 F_{pp}^3F_{ppz}
-
9 F_pF_{pp}F_{ppp}F_{ppz}
-
F_{pp}^2F_{ppp}F_{pz}
+
6 F_pF_{ppp}^2F_{pz}
-
2 F_pF_{pp}F_{pppp}F_{pz}
\big\}.
\end{gather*}

The degenerate bilinear form $\widetilde{g}$ descends
to a Lorentzian conformal class $[g]$ on the leaf space~$M_3$
of the distribution $\mathcal{D}_4$, if and only if
the Monge invariant $\Maux(F) \equiv 0$ vanishes identically.

When $\Maux(F) \equiv 0$, the local coordinates on $M_3$ are $(x,y,z)$ with the projection
\begin{align*}
P_7& \longrightarrow M_3,\\
\big(x,y,z,p,{\sf u}_3,{\sf u}_5,{\sf u}_8\big)& \longmapsto (x,y,z),
\end{align*}
and the conformal class $[g]$ has a representative which is explicitly expressed in terms of ${\rm d}x$, ${\rm d}y$, ${\rm d}z$, with coefficients depending only on $(x,y,z)$.

Next, $\Omega_3$ descends to a $1$-form denoted $A$ given up to the differential of a function on $M_3 \ni (x,y,z)$, if and only if $\Kaux(F) \equiv 0$.

Moreover, the pair $\big( \widetilde{g}, \Omega_3 \big)$ descends to a~representative of a Weyl structure $[ (g, A)]$ on $M_3$, if and only if both $\Maux(F) \equiv 0$ and $\Kaux(F) \equiv 0$.

Finally, this Weyl structure is actually Einstein--Weyl, namely it satisfies~\eqref{Einstein--Weyl-equations}.
\end{Theorem}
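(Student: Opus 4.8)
The decisive fact supplied by Theorem~\ref{Theorem-Cartan-bundle-z-y-F} is that the canonical coframe on $P_7$ obeys \emph{exactly} the structure equations~\eqref{structure-equations-GN-2009}, i.e.\ the very same equations as in the ODE case of Theorem~\ref{Theorem-Cartan-connection-3-rd-ODE}. Hence every consequence drawn in Section~\ref{3-rd-order-ODE-point-transformations} purely from those equations transfers verbatim; the only new ingredients are the explicit coframe and the translation of the primary invariants via $\Aaux_1 = -\tfrac{1}{54{\sf u}_3^3}\,\Maux/F_{pp}^3$ and $\Caux_1 = \tfrac{1}{3{\sf u}_3^2{\sf u}_5}\,\Kaux/F_{pp}^5$. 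I would first obtain the displayed formula for $\widetilde g$ by inserting $\theta^\mu = (\text{matrix})\,\omega^\nu$ from Theorem~\ref{Theorem-Cartan-bundle-z-y-F} and the convenient representatives $\omega^1,\dots,\omega^4$ into $\theta^3\theta^3+\theta^1\theta^4+\theta^4\theta^1$; the overall factor $\tfrac{{\sf u}_5^2}{9F_{pp}^2}$ and the bracketed combination of ${\rm d}x,{\rm d}y,{\rm d}z$ fall out of a direct (if tedious) computation. The signature is immediate: restricted to $\{\theta^1,\theta^3,\theta^4\}$ the quadratic form $(\theta^3)^2+2\theta^1\theta^4$ has eigenvalues $\{1,1,-1\}$, so signature $(+,+,-)$, while the remaining four coframe directions lie in its kernel, giving $(+,+,-,0,0,0,0)$ on $P_7$ and identifying the kernel with $\mathcal{D}_4$, the annihilator of $\theta^1,\theta^3,\theta^4$.

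\textbf{Integrability and coordinates.} Reading off~\eqref{structure-equations-GN-2009}, the forms ${\rm d}\theta^1,{\rm d}\theta^3,{\rm d}\theta^4$ all lie in the algebraic ideal generated by $\theta^1,\theta^3,\theta^4$ (the only a priori dangerous terms $-\theta^2\wedge\theta^3$, $+\theta^2\wedge\theta^4$, $-\Aaux_1\theta^1\wedge\theta^2$ still each carry one of $\theta^1,\theta^3,\theta^4$), so this ideal is differentially closed and $\mathcal{D}_4$ is Frobenius-integrable unconditionally. Writing $\{E_1,\dots,E_4,F_1,F_2,F_3\}$ for the frame dual to $\{\theta^1,\dots,\theta^4,\Omega_1,\Omega_2,\Omega_3\}$, one has $\mathcal{D}_4=\langle E_2,F_1,F_2,F_3\rangle$ and the leaves project along $(p,{\sf u}_3,{\sf u}_5,{\sf u}_8)$. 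That $(x,y,z)$ serve as coordinates on $M_3$ follows since none of $\omega^1,\omega^3,\omega^4$ (hence none of $\theta^1,\theta^3,\theta^4$) contains ${\rm d}p$, so $\langle\theta^1,\theta^3,\theta^4\rangle=\langle{\rm d}x,{\rm d}y,{\rm d}z\rangle$ as a module of semibasic forms.

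\textbf{Descent of $\widetilde g$ and of $\Omega_3$.} For the conformal class I would compute $\mathcal{L}_V\widetilde g$ for $V$ ranging over the generators of $\mathcal{D}_4$, via Cartan's formula $\mathcal{L}_V={\rm d}\,\iota_V+\iota_V\,{\rm d}$ and~\eqref{structure-equations-GN-2009}. A short calculation yields $\mathcal{L}_{F_1}\widetilde g=\mathcal{L}_{F_2}\widetilde g=0$ and $\mathcal{L}_{F_3}\widetilde g=2\widetilde g$ (the expected conformal rescaling along the $H_3$-fibres), whereas $\mathcal{L}_{E_2}\widetilde g=2\Aaux_1(\theta^1)^2$. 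Thus the semibasic line $\mathbb{R}\,\widetilde g$ is preserved by the flow of $\mathcal{D}_4$ --- equivalently $\widetilde g$ defines a conformal class on $M_3$ --- if and only if $\Aaux_1\equiv0$, i.e.\ (using $F_{pp}\neq0$) exactly $\Maux(F)\equiv0$; when this holds, $\mathcal{L}_{E_2}\widetilde g=0$ removes all $p$-dependence of the rescaled coefficients, so $[g]$ has a representative in ${\rm d}x,{\rm d}y,{\rm d}z$ with coefficients in $(x,y,z)$ alone. The $1$-form is parallel: since $\Omega_3$ is the connection form dual to $F_3$, its pullback under a change of section along $F_3$ shifts by an exact ${\rm d}\varphi$, which is precisely the Weyl gauge; from the equation for ${\rm d}\Omega_3$ one finds $\mathcal{L}_{F_1}\Omega_3=\mathcal{L}_{F_2}\Omega_3=\mathcal{L}_{F_3}\Omega_3=0$ but $\mathcal{L}_{E_2}\Omega_3=\Caux_1\,\theta^1$. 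Hence $\Omega_3$ pushes down to $A$, well defined modulo ${\rm d}\varphi$, exactly when $\Caux_1\equiv0$, i.e.\ $\Kaux(F)\equiv0$. Combining, $(\widetilde g,\Omega_3)$ descends to a representative $[(g,A)]$ of a Weyl structure on $M_3$ precisely when $\Maux(F)\equiv0\equiv\Kaux(F)$.

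\textbf{Einstein--Weyl and the main obstacle.} Finally, the descended pair $(g,A)=\big(\theta^3\theta^3+\theta^1\theta^4+\theta^4\theta^1,\ \Omega_3\big)$ arises from a coframe obeying the same equations~\eqref{structure-equations-GN-2009} as in the ODE case, so Cartan's theorem recalled in Section~\ref{3-rd-order-ODE-point-transformations} applies unchanged: any Weyl geometry produced by this leaf-space construction automatically satisfies the Einstein--Weyl equations~\eqref{Einstein--Weyl-equations}. This last step is the part I expect to demand the most care \emph{if} one insists on an \emph{ab initio} verification on the PDE side rather than quoting the ODE result: one would have to solve conditions (A$'$)--(B$'$) of Section~\ref{Weyl-geometry-summary} for the Weyl connection ${\Gamma^\mu}{}_\nu$ of $g$, assemble the curvature and the Weyl--Ricci tensor $R_{\mu\nu}$, and check that $R_{(\mu\nu)}-\tfrac13 R g_{\mu\nu}$ vanishes identically once $\Aaux_1\equiv0\equiv\Caux_1$. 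The cleaner route, which I would adopt, is to observe that the identification $\widehat\omega$ of the coframe with a Cartan connection together with $\Aaux_1\equiv0\equiv\Caux_1$ forces the curvature $\widehat K$ into exactly the reduced normal form from which Cartan deduced the Einstein--Weyl property, so that no independent curvature computation is needed.
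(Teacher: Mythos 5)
Your proposal is correct and follows essentially the same route as the paper, which in fact prints no separate proof of Theorem~\ref{Theorem-Wey-g-A-from-ODE}: everything there rests on the identity of the structure equations~\eqref{structure-equations-GN-2009} with the ODE case, on the leaf-space descent criteria governed by the primary invariants $\Aaux_1\sim\Maux(F)$ and $\Caux_1\sim\Kaux(F)$, and on Cartan's 1943 theorem that the resulting leaf-space Weyl geometry is automatically Einstein--Weyl. Your Lie-derivative computations $\mathcal{L}_{E_2}\widetilde g=2\Aaux_1\big(\theta^1\big)^2$ and $\mathcal{L}_{E_2}\Omega_3=\Caux_1\,\theta^1$ (together with the $F_i$-invariances) are accurate and supply exactly the descent argument the paper leaves implicit.
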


\section[Transformation of the W\"unschmann invariant into the Monge invariant]{Transformation of the W\"unschmann invariant\\ into the Monge invariant}
\label{transformation-Wunschmann-invariant-Monge-invariant}

As we now know, PDEs $z_y = F(x,y,z,z_x)$ with $DF \equiv 0 \neq F_{z_xz_x}$ satisfying $\Aaux_1 \equiv 0 \equiv \Caux_1$ always define an Einstein--Weyl geometry on the leaf
space $M_3$ of the integrable distribution in~$P_7$ annihilated by $\big\{ \theta^1, \theta^3, \theta^4 \big\}$.

The advantage of looking at a Weyl geometry from the
PDE $z_y = F(x,y,z,z_x)$ point of view
rather than from the ODE side $y''' = H(x,y,y',y'')$,
is that now the W\"unschmann invariant of the ODE
becomes the much simpler and classical {\it Monge invariant}
\[
\Aaux_1(H) \sim\Maux(F) =9 F_{pp}^2 F_{ppppp}-45 F_{pp} F_{pppp} F_{ppp}+ 40 F_{ppp}^3.
\]

Serendipitously, the identical vanishing $\Maux(F) \equiv 0$ is well
known to be equivalent to the condition that the graph of $p
\longmapsto F(p)$ is contained in a conic of the $(p,F)$-plane, with
parameters $(x,y,z)$. More precisely,
\begin{gather}\label{conic-p-F}
0 \equiv\Maux(F)
\quad
\Longleftrightarrow
\quad
\AA F^2
+
2 \BB F p
+
\CC p^2
+
2 \KK F
+
2 \LL p
+
\MM
 \equiv
0,
\end{gather}
for some functions $\AA$, $\BB$, $\CC$, $\KK$, $\LL$, $\MM$ depending only on $(x,y,z)$.

Thus, passing from the formulation of Einstein--Weyl's equations in terms of a~3\textsuperscript{rd} order ODE $y''' = H(x,y,y',y'')$ to the formulation in terms of a PDE $z_y = F(x,y,z,z_x)$, we are able to find a rather {\em large class of solutions} to the equation
\[
\Waux(H) \equiv 0.
\]
Indeed, by replacing $\Waux(H) \leadsto \Maux(F)$, the solution~(\ref{conic-p-F}) is just conical!

\section[How to construct new explicit Lorentzian Einstein--Weyl metrics?]{How to construct new explicit Lorentzian Einstein--Weyl\\ metrics?}\label{how-construct-Einstein--Weyl-metrics}

But remember we also have to assure that
\[
0
 \equiv
DF
 =
\partial_xF
+
p \partial_zF.
\]
The simultaneous conditions $DF \equiv 0 \equiv \Maux(F)$
can be achieved for instance by taking $F$ satisfying
\[
\aaux F^2
+
2\baux F (z-px)
+
\caux (z-px )^2
+
2 \kaux F
+
2 \laux (z-px )
+
\maux
 \equiv
0,
\]
with $\aaux$, $\baux$, $\caux$, $\kaux$, $\laux$, $\maux$ being now functions of $y$ {\em only!}

From now on, we will analyze this special solution for $\Maux(F) \equiv 0 \equiv DF$.
The simplest case occurs when avoiding square root by choosing
\[
\aaux
 :=
0,
\]
so that
\begin{gather}\label{fractional-solution-F-b-c-k-l-m}
F
 :=
\frac{
- \caux (z-xp)^2-2\laux (z-xp)-\maux}{
2
\baux (z-xp)
+
2 \kaux}.
\end{gather}
Here
\[
\baux
 =
\baux(y),
\qquad
\caux
 =
\caux(y),
\qquad
\kaux
 =
\kaux(y),
\qquad
\laux
 =
\laux(y),
\qquad
\maux
 =
\maux(y)
\]
are {\em free} arbitrary differentiable functions of one
variable $y$.

A direct check shows that {\em remarkably
this solution~\eqref{fractional-solution-F-b-c-k-l-m}
also satisfies $\Kaux(F) \equiv 0$}!

\begin{Proposition}All such
\[ F := \frac{- \caux (z-xp)^2-2\laux (z-xp)-\maux}{
2 \baux (z-xp) + 2 \kaux}\] with any functions $\baux$, $\caux$,
$\kaux$, $\laux$, $\maux$ of $y$, lead to Einstein--Weyl structures in
$3$-dimensions.
\end{Proposition}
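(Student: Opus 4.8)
The plan is to check that this $F$ satisfies every hypothesis of Theorem~\ref{Theorem-Wey-g-A-from-ODE} --- the standing pair $DF \equiv 0 \neq F_{pp}$ together with $\Maux(F) \equiv 0$ and $\Kaux(F) \equiv 0$ --- after which the Einstein--Weyl conclusion is immediate from that theorem. The organizing remark is that $F$ depends on $(x,y,z,p)$ only through $w := z - xp$ and $y$, so we write $F = G(w,y)$ with $G(w,y) = (-\caux w^2 - 2\laux w - \maux)/(2\baux w + 2\kaux)$; crucially $G$ satisfies the conic relation $\caux w^2 + 2\laux w + \maux + 2\baux G w + 2\kaux G \equiv 0$ in the $(w, G)$-plane.

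Two of the three conditions then fall out at once. Since $Dw = -p + p = 0$ and $Dy = 0$, the chain rule gives $DF = G_w\, Dw + G_y\, Dy \equiv 0$. From $\partial_p w = -x$ one gets $F_{p^{(n)}} = (-x)^n G_{w^{(n)}}$, so $F_{pp} = x^2 G_{ww}$ is nonzero wherever $x \neq 0$ and $G_{ww} \neq 0$, which is generic for the rational $G$ above. The same substitution yields
\[
\Maux(F) = -x^9\big(9\, G_{ww}^2 G_{wwwww} - 45\, G_{ww} G_{www} G_{wwww} + 40\, G_{www}^3\big),
\]
and the bracketed factor is exactly the Monge invariant of $G$ in the variable $w$; it vanishes because $G$ lies on a conic in the $(w, G)$-plane, so $\Maux(F) \equiv 0$ by~\eqref{conic-p-F}.

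The heart of the argument, and the expected main obstacle, is to establish $\Kaux(F) \equiv 0$ for the long expression recorded in Theorem~\ref{Theorem-Cartan-bundle-z-y-F}. Here I would again pass to $G$: the pure and mixed derivatives become $F_{p^{(n)}} = (-x)^n G_{w^{(n)}}$ and $F_{p^{(n)}z} = (-x)^n G_{w^{(n+1)}}$, while $F_p = -x G_w$, $F_z = G_w$, and $\Delta = \partial_y + F\partial_z$ acts on functions of $(w,y)$ as $\partial_y + G\partial_w$, so $\Delta F_{p^{(n)}} = (-x)^n(G_{w^{(n)} y} + G\, G_{w^{(n+1)}})$. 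A short count of $p$-subscripts shows that \emph{every} one of the eighteen monomials of $\Kaux$ has total $p$-weight $11$, so each carries the common factor $(-x)^{11}$ and $\Kaux(F) = -x^{11}\, Q$, where $Q$ is a universal differential polynomial in the $w$- and $wy$-derivatives of $G$ up to order six. To see $Q \equiv 0$ I would eliminate these derivatives recursively through the conic relation --- differentiating it in $w$ presents $(\baux w + \kaux) G_{w^{(n+1)}}$ as a polynomial in lower derivatives, and differentiating it in $y$ supplies $G_y$ and its $w$-derivatives in terms of $\baux', \dots, \maux'$ --- thereby turning $Q \equiv 0$ into a finite polynomial identity in $w$ whose coefficients are polynomials in $\baux, \caux, \kaux, \laux, \maux$ and their first $y$-derivatives. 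Clearing the single denominator $(\baux w + \kaux)$ and expanding should confirm the identity outright; the whole point of the statement is precisely that it holds for \emph{arbitrary} choices of the five functions, imposing no further constraint.

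With $DF \equiv 0 \neq F_{pp}$ and $\Maux(F) \equiv 0 \equiv \Kaux(F)$ all verified, Theorem~\ref{Theorem-Wey-g-A-from-ODE} guarantees that $\widetilde g = \theta^3\theta^3 + \theta^1\theta^4 + \theta^4\theta^1$ and $\Omega_3$ descend to a representative $(g, A)$ of a Weyl class on the leaf space $M_3 \ni (x,y,z)$, and that this class is Einstein--Weyl, which is the assertion.
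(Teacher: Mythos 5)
Your proposal is correct and follows essentially the same route as the paper: verify $DF\equiv 0\neq F_{pp}$ and $\Maux(F)\equiv 0$ (both automatic from the conic ansatz in $w=z-xp$) together with $\Kaux(F)\equiv 0$, then invoke the descent construction of Theorem~\ref{Theorem-Wey-g-A-from-ODE}. The decisive identity $\Kaux(F)\equiv 0$ is in both cases a direct (machine-assisted) verification --- the paper simply asserts ``a direct check shows'' --- and your homogeneity bookkeeping (each monomial of $\Kaux$ having $p$-weight $11$, reducing everything to derivatives of $G(w,y)$ constrained by the conic relation) is a sound way to organize that check, though you stop short of actually expanding it.
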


Performing the Cartan procedure to determine the coframe $\big\{
\theta^1, \theta^2, \theta^3, \theta^4, \Omega_1, \Omega_2, \Omega_3
\big\}$, projecting both
$\theta^3
\theta^3 + \theta^1 \theta^4 + \theta^4 \theta^1$
and $\Omega_3$ to the leaf space of the annihilator $M^3$ of
$\big\{ \theta^1, \theta^3, \theta^4 \big\}$, equipping $M_3 \equiv
\mathbb{R}^3$ with coordinates $(x,y,z)$, we therefore obtain {\em
functionally parameterized} Einstein--Weyl structures $\big(g,
A\big)$ on $\mathbb{R}^3 \ni (x,y,z)$ represented by the signature $(2,1)$
Lorentzian metric
\begin{gather*}
g :=
(\kaux+\baux z)^2{\rm d}x^2
+
x^2
\big(
\laux^2
-
\caux\maux
\big)
{\rm d}y^2
+
x^2\baux^{ 2}
{\rm d}z^2
\\
\hphantom{g :=}{}
+
2x
(
\caux\kaux z
-
\baux\laux z
+
\kaux\laux
-
\baux\maux
)
{\rm d}x\,{\rm d}y
-
2x\baux
(
\kaux
+
\baux z
)
{\rm d}x\,{\rm d}z
-
2x^2 (
\caux\kaux
-
\baux\laux
 )
{\rm d}y\,{\rm d}z,
\end{gather*}
together with the differential $1$-form
\begin{gather*}
A
 :=
\frac{
- \caux\kaux+\baux\laux+\baux'\kaux-\baux\kaux'
}{
x \big(\caux\kaux^2-2\baux\kaux\laux+\baux^2\maux\big)}
\big(
x\baux \,{\rm d}z
-
(\kaux+\baux z) \,{\rm d}x
\big)\\
\hphantom{A :=}{}
+
\frac{
\baux\laux^2
-
\caux\baux\maux
-
\baux'\kaux\laux
+
\baux\baux'\maux
+
\caux\kaux\kaux'
-
\baux\kaux'\laux
}{
\caux\kaux^2-2\baux\kaux\laux+\baux^2\maux}\,
{\rm d}y.
\end{gather*}

An independent direct check confirms that equations~(\ref{EW-equations-introduction}) are indeed identically
fulfilled.

As regards the Cotton tensor, we compute its 5 components,
and find that they are not identically zero.
Hence the obtained Einstein--Weyl structures are generically
conformally non-flat. Thus,
Theorem~\ref{Thm-5-parameterized-Einstein--Weyl-structures}
is established. The story for Theorem~\ref{Thm-9-parameters-family} is quite similar.

Next, without assuming $\AA \equiv 0$
in~\eqref{conic-p-F}, let us now make the ansatz that
\begin{gather*}%\label{I-Ansatz-A-nonzero}
\aaux F^2
+
2\baux F (z-xp)
+
\caux (z-xp)^2
+
2\kaux F
+
2\laux (z-xp)
+
\maux
 \equiv
0,
\end{gather*}
for some arbitrary functions
$\aaux$, $\baux$, $\caux$, $\kaux$, $\laux$, $\maux$ of $y$.
The (two) solutions $F$
automatically satisfy $DF \equiv 0 \equiv \Maux(F)$.

Since the solutions to Monge's equation are conics in the $(p,F)$-plane, we can rewrite in a~hyperbolic setting
\[
\big(
\aaux F+\baux (z-xp)+\caux
\big)^2
-
\big(
\kaux F+\laux (z-xp)+\maux
\big)^2
 \equiv
1,
\]
with changed functions
$\aaux$, $\baux$, $\caux$, $\kaux$, $\laux$, $\maux$ of $y$.
To avoid transcendental functions in computations,
we parametrize $\cosh t = \frac{1+q^2}{2 q}$ and
$\sinh t = \frac{1-q^2}{2 q}$,
and then, solving for $F$ and for $z-xp$, we may start from
\begin{gather*}
F
 =
\aaux(y)
\frac{1+q^2}{2 q}
+
\baux(y)
\frac{1-q^2}{2 q}
+\caux(y),\\
z-xp
 =
\kaux(y)
\frac{1+q^2}{2 q}
+
\laux(y)
\frac{1-q^2}{2 q}
+
\maux(y),
\end{gather*}
again with (changed) free functions
$\aaux$, $\baux$, $\caux$, $\kaux$, $\laux$, $\maux$ of $y$. Taking
\[
\omega_0^1
 :=
{\rm d}(z-xp)
+
x \,{\rm d}p
-
F \,{\rm d}y,
\qquad
\omega_0^2
 :=
{\rm d}x,
\qquad
\omega_0^3
 :=
{\rm d}y,
\qquad
\omega_0^4
 :=
{\rm d}p,
\]
and performing para-CR Cartan reduction to an
$\{e\}$-structure/connection, we obtain

\begin{Proposition} The second invariant condition $\Kaux(F) \equiv 0$
holds precisely in the following two cases:
\begin{enumerate}\itemsep=0pt
\item[$(1)$] $\kaux = \laux$;
\item[$(2)$]
$\caux = \maux'$ and $\aaux = \frac{\baux \laux + \kaux \kaux' - \laux \laux'}{\kaux}$.
\end{enumerate}
\end{Proposition}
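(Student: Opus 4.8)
The plan is to exploit the explicit rational parametrization already in hand. Having solved Monge's equation in the hyperbolic form, we have $F$ and $z - xp$ written as rational functions of the parameter $q$ with coefficients $\aaux(y),\dots,\maux(y)$; equivalently $F$ is an explicit function of $(x,y,z,p)$ after eliminating $q$. The key formula we may invoke is the closed expression for $\Kaux(F)$ recorded in Theorem~\ref{Theorem-Cartan-bundle-z-y-F}, together with the fact from Theorem~\ref{Theorem-Cartan-bundle-z-y-F} that $\Caux_1 = \tfrac{1}{3}\,\tfrac{1}{{\sf u}_3^2{\sf u}_5}\,\tfrac{\Kaux}{F_{pp}^5}$, so that $\Kaux(F)\equiv 0$ is a genuine point invariant whose vanishing does not depend on the group parameters. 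Thus the entire statement reduces to a computation: substitute the ansatz into the differential polynomial $\Kaux$ and determine exactly when it vanishes identically in $(x,y,z,p)$.

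First I would record, from the two parametric equations, the partial derivatives of $F$ with respect to $p$ up to fifth order, and the mixed derivatives $F_{pz}, F_{ppz}, F_{pppz}, F_{ppppz}$ together with the $\Delta$-derivatives $\Delta F_{pp},\dots,\Delta F_{ppppp}$ that enter $\Kaux$. The cleanest route is to treat $q$ as the true independent fiber variable: since $z - xp$ is given explicitly in terms of $q$ and $y$, one inverts to express $q$ (locally) as a function of $(x,y,z,p)$, and then every $p$-derivative of $F$ becomes a $q$-derivative divided by $\partial(z-xp)/\partial q$ via the chain rule. This keeps all intermediate quantities rational in $q$ with coefficients polynomial in $\aaux,\dots,\maux$ and their $y$-derivatives, which is essential for the vanishing condition to be readable.

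Next I would assemble $\Kaux$ as a single rational expression in $q$ and clear denominators, so that $\Kaux \equiv 0$ becomes the identical vanishing of a polynomial in $q$ whose coefficients are differential polynomials in $\aaux(y),\dots,\maux(y)$. Setting each coefficient to zero produces an overdetermined system of ODEs in the six unknown functions of $y$. I expect this system to factor, and the two advertised branches $(1)$ $\kaux = \laux$ and $(2)$ $\caux = \maux'$ with $\aaux = \tfrac{\baux\laux + \kaux\kaux' - \laux\laux'}{\kaux}$ to emerge as the two irreducible components of its solution variety. Verifying that these two cases are not only sufficient but exhaustive — that no further isolated or degenerate solutions hide in the polynomial system — is the delicate point, since one must track which coefficients can vanish for reasons of algebraic dependence among the $\frac{1\pm q^2}{2q}$ building blocks rather than by constraints on the $\aaux,\dots,\maux$.

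The main obstacle is precisely this book-keeping of the polynomial coefficients: $\Kaux$ is a high-order, many-term differential polynomial, and after substitution the number of monomials in $q$ is large, so organizing the elimination without losing a branch requires care. The practical strategy is to compute symbolically, factor the resulting $q$-polynomial, and isolate the lowest-degree coefficient (in $q$) whose vanishing already forces a dichotomy; I anticipate that the leading factor cleanly separates the case $\kaux = \laux$ from its complement, after which, on the locus $\kaux \neq \laux$, the remaining coefficients force $\caux = \maux'$ and then the stated expression for $\aaux$. Confirming that these relations make \emph{all} remaining coefficients vanish — i.e.\ that the two branches are genuine solutions and not merely necessary conditions — closes the argument.
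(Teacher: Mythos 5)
Your approach coincides with the paper's: the Proposition is obtained there by exactly this kind of direct symbolic evaluation of the point invariant $\Caux_1\sim\Kaux(F)$ on the hyperbolic ansatz, followed by clearing denominators and factoring the resulting polynomial-in-$q$ coefficient system into the two branches (the paper records only the outcome of this Cartan-reduction computation, not the intermediate algebra). One small correction to carry through: since $F$ depends on $(x,z,p)$ only through $s=z-xp$, a $p$-derivative is $\partial_p=-x\,\big(\partial s/\partial q\big)^{-1}\partial_q$, so the factor $-x$ (and the resulting powers of $x$, which assemble into an overall prefactor by invariance) must be tracked alongside the division by $\partial(z-xp)/\partial q$.
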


In case (1), we obtain Einstein--Weyl structures for all free functions $\aaux$, $\baux$, $\caux$,
$\laux$, $\maux$ of $y$ given by
\begin{gather*}
g :=2 \tau^1 \tau^2+\big(\tau^3\big)^2,
\qquad
A :=-\frac{2(\aaux+\baux)}{x(\aaux-\baux)\laux}\tau^2-
\frac{\caux-\maux'}{x(\aaux-\baux)}
\tau^3,
\end{gather*}
where
\begin{gather*}
\tau^1
 :=
x(\aaux+\baux)\,{\rm d}y
-
2\laux \,{\rm d}x,
\qquad
\tau^2
 :=
-
\tfrac{1}{2}
x(\aaux-\baux)\,{\rm d}y,
\qquad
\tau^3
 :=
x\caux\,{\rm d}y
-
x\,{\rm d}z
+
(z-\maux)\,{\rm d}x.
\end{gather*}
We verify that these Einstein--Weyl structures have nontrivial $F = {\rm d}A \not\equiv 0$ and nontrivial
$\text{\sf Cotton}([g]) \not\equiv 0$.

In case (2), we obtain Einstein--Weyl structures given by
\[
g
 :=
2 \tau^1 \tau^2
+
\big(\tau^3\big)^2,
\qquad
A
 :=
{\rm d} \big[
\log \big(x^2 \eaux\big)
\big],
\]
where
\begin{gather*}
\tau^1 :=
(\kaux+\laux)\kaux \,{\rm d}x
+
x (\baux\kaux-\baux\laux+\kaux\kaux'-\laux\laux'
 )\,{\rm d}y,
\\
\tau^2
 :=
\tfrac{1}{2}
(\kaux-\laux)\kaux \,{\rm d}x
+
\tfrac{1}{2} x
 (
\baux\kaux-\baux\laux+\kaux\kaux'-\laux\laux'
 )\,{\rm d}y,\\
\tau^3
 :=
- (z-\maux)\kaux\,{\rm d}x
-
x\kaux\maux\,{\rm d}y
+
x\kaux \,{\rm d}z.
\end{gather*}
But this structure, which depends on $3$ functions
$\baux$, $\kaux$, $\laux$ of $y$, is flat
\[
 {\rm d}A
 \equiv
0
 \equiv
\text{\sf Cotton} ([g] ).
\]

Finally, without replacing $p$ by $z-xp$, let us make the ansatz that
\begin{gather*}%\label{II-Ansatz-A-nonzero}
\aaux F^2
+
2\baux F p
+
\caux p^2
+
2\kaux F
+
2\laux p
+
\maux
 \equiv
0.
\end{gather*}
Dealing similarly with the hyperbolic case,
\begin{gather*}
F
 =
\aaux(y)
\frac{1+q^2}{2 q}+\baux(y)
\frac{1-q^2}{2 q}+\caux(y),\\
p
 =
\kaux(y)
\frac{1+q^2}{2 q}
+
\laux(y)
\frac{1-q^2}{2 q}
+
\maux(y),
\end{gather*}
we obtain nontrivial Einstein--Weyl structures.
For instance, when $\kaux = \laux$ as in
(1) above
\[
g
 :=
2 \tau^1 \tau^2
+
\big(\tau^3\big)^2,
\qquad
A
 :=
- \frac{\maux'}{(\aaux-\baux)\laux}
\tau^3,
\]
where
\begin{gather*}
\tau^1 :=2\laux\,{\rm d}x+(\aaux+\baux)\,{\rm d}y,
\qquad
\tau^2 :=- \tfrac{1}{2}(\aaux-\baux)\,{\rm d}y,
\qquad
\tau^3 := {\rm d}z -\maux \,{\rm d}x +(\aaux+\baux)\,{\rm d}y.
\end{gather*}
Note that this is again nontrivial
\[
{\rm d}A \not\equiv 0 \not\equiv
\text{\sf Cotton} ([g] ).
\]
and note that we do not have $x$, $z$ dependence here.

\section[Transforming $z_y=F(z_x)$ into $w''' = w'' H(t)$]{Transforming $\boldsymbol{z_y=F(z_x)}$ into $\boldsymbol{w''' = w'' H(t)}$}\label{z-y-F-embedded-in-w-H}

We end up by exploring a link between our PDE systems and 3\textsuperscript{rd} order ODEs.
For simplicity, we will assume that $F = F(z_x)$ depends only on $p = z_x$.

To avoid notational confusion, 3\textsuperscript{rd}-order
ODEs will now be denoted as $w''' = H (t, w, w', w'')$,
and the fundamental $1$-forms as
\begin{alignat*}{3}
&\omega^1 :={\rm d}z-p\,{\rm d}x-F(p)\,{\rm d}y,\qquad && \theta^1 :={\rm d}w-w_1\,{\rm d}t,& \\
&\omega^2 :={\rm d}p,\qquad && \theta^2 :={\rm d}t,& \\
& \omega^3 :={\rm d}x,\qquad && \theta^3 :={\rm d}w_1-w_2\,{\rm d}t,& \\
& \omega^4 :={\rm d}y, \qquad && \theta^4 :={\rm d}w_2-H(t,w,w_1,w_2)\,{\rm d}t.&
\end{alignat*}
We ask what equivalence class of 3\textsuperscript{rd}-order ODE's corresponds to the equivalence class
of PDEs $z_y = F(z_x)$, still with $F_{pp} \neq 0$, and under the $G$-structures of Sections~\ref{3-rd-order-ODE-point-transformations} and~\ref{PDE-z_y-F}.

For this, since $\omega^1$ and $\theta^1$ are both defined up to plain dilations $\omega^1 \sim {\sf u} \omega^1$ and $\theta^1 \sim {\sf u} \theta^1$, we transform $\omega^1$ in order to make the shape of $\theta^1$ appear, using that $F$ depends only on $p$
\[
\omega^1 =
{\rm d}(z-x p-y F(p))
-
(
-x-y F_p(p)
)
\,{\rm d}p
 =:
{\rm d}w
-
w_1\,{\rm d}t
\]
with
$t :=p$, $w := z-x p-y F(p)$, $w_1 :=- x-y F_p(p)$.
With this, $\omega^2 = {\rm d}p = {\rm d}t = \theta^2$.
Next, using $\omega^3 \sim - \omega^3 - \underline{{\sf u} \,{\rm d}y}$, it comes
\begin{gather*}
\omega^3 = {\rm d}x
 = -
\big[{\rm d} (-x-y F_p(p) )
+
y F_{pp}(p)
\,{\rm d}p
+
F_p(p)\,{\rm d}y
\big]
\\
\hphantom{\omega^3 = {\rm d}x}{} \sim
\big[
{\rm d}w_1
+
y F_{pp}(p)
\,{\rm d}p
+
F_p(p)\,{\rm d}y
\big]
-
\underline{F_p(p)\,{\rm d}y}
 =
{\rm d}w_1 - (-y F_{pp}(p))\,{\rm d}p,
\end{gather*}
 whence
$w_2 :=- y F_{pp}(p)$.

A last computation using $\omega^4 \sim {\sf u} \omega^4$
\begin{gather*}
\omega^4
 =
{\rm d}y
 =
-
\frac{1}{F_{pp}(p)}
\big[
{\rm d} (
- y F_{pp}(p)
 )
+
y F_{ppp}(p)\,{\rm d}p
\big]
 \sim
{\rm d}w_2
-
\big({-}y F_{ppp}(p) \,{\rm d}p
\big),
\end{gather*}
shows that the right-hand side function $H = H(t, w_2)$
of the associated ODE $w''' = H$ is independent
of $w, w_1$ as it must be
\[
H := - y F_{ppp}(p) = w_2
\frac{F_{ttt}(t)}{F_{tt}(t)}.
\]
Hence
\[
w''' = w'' \frac{F_{ttt}(t)}{F_{tt}(t)}
\]
is the 3\textsuperscript{rd} order
ODE associated to the para-CR structure given
by $z_y = F(z_x)$.
Observe that $z_y = \frac{1}{2} z_x^2$
becomes $w''' = 0$,
leading to the flat Einstein--Weyl structure.

\begin{Assertion}The W\"unschmann invariant for ODEs $w''' = w''
\frac{F_{ttt}(t)}{F_{tt}(t)}$, where $F(t)$ with
$F_{tt} \neq 0$ is an arbitrary function of one variable,
corresponds to the Monge invariant of the
PDE $z_y = F(z_x)$:
\begin{gather*}
\text{\sf W\"unschmann}
\left(
w_2
\frac{F_{ttt}(t)}{F_{tt}(t)}
\right)
 =
\frac{9 F_{tt}(t) F_{ttttt}(t)
-
45 F_{tt} F_{ttt} F_{tttt}
+
40 F_{ttt}^3}{F_{tt}^3}
 =
\frac{\text{\sf Monge}(F)}{F_{tt}^3}.
\end{gather*}
\end{Assertion}

\begin{proof}
Among the $25$ terms of W\"unschmann's invariant shown
in the Introduction, only $7$ remain thanks to $0 \equiv H_w
\equiv H_{w_1}$:
\begin{gather*}
\text{\sf W\"unschmann}\big(H(t,w_2)\big) =- 9 H H_{w_2} H_{w_2w_2}
- 9 H_t H_{w_2w_2}+18 H H_{tw_2w_2}-18 H_{w_2} H_{tw_2}\\
\hphantom{\text{\sf W\"unschmann}\big(H(t,w_2)\big) =}{}
+9 H_{ttw_2}+4 H_{w_2}^3+9 H^2H_{w_2w_2w_2},
\end{gather*}
and a direct substitution of $H := w_2 \frac{F_{ttt}(t)}{F_{tt}(t)}$ leads to the result.
\end{proof}

\subsection*{Acknowledgements}
Insights of the anonymous referees are gratefully acknowledged.
This collaboration is supported by the National Science
Center, Poland, grant number 2018/29/B/ST1/02583.

\pdfbookmark[1]{References}{ref}
\LastPageEnding


\begin{thebibliography}{99}
\footnotesize\itemsep=0pt

\bibitem{Calderbank-Pedersen-1999}
Calderbank D.M.J., Pedersen H., Einstein--{W}eyl geometry, in Surveys in
 Differential Geometry: Essays on {E}instein Manifolds, \textit{Surv. Differ.
 Geom.}, Vol.~6, \href{https://doi.org/10.4310/SDG.2001.v6.n1.a14}{Int. Press}, Boston, MA, 1999, 387--423.

\bibitem{Cartan-1941}
Cartan E., La geometria de las ecuaciones diferenciales de tercer orden,
 \textit{Rev. Mat. Hispano-Amer.} \textbf{4} (1941), 1--31.

\bibitem{Cartan-1943}
Cartan E., Sur une classe d'espaces de {W}eyl, \href{https://doi.org/10.24033/asens.901}{\textit{Ann. Sci. \'Ecole Norm.
 Sup.}} \textbf{60} (1943), 1--16.

\bibitem{Dunajski-Mason-Tod-2001}
Dunajski M., Mason L.J., Tod P., Einstein--{W}eyl geometry, the d{KP} equation
 and twistor theory, \href{https://doi.org/10.1016/S0393-0440(00)00033-4}{\textit{J.~Geom. Phys.}} \textbf{37} (2001), 63--93, \href{https://arxiv.org/abs/math.DG/0004031}{arXiv:math.DG/0004031}.

\bibitem{Eastwood-Tod-2000}
Eastwood M.G., Tod K.P., Local constraints on {E}instein--{W}eyl geometries:
 the 3-dimensional case, \href{https://doi.org/10.1023/A:1006621831435}{\textit{Ann. Global Anal. Geom.}} \textbf{18} (2000), 1--27.

\bibitem{Fritelli-Kozameh-Newman-2001}
Frittelli S., Kozameh C., Newman E.T., Differential geometry from differential
 equations, \href{https://doi.org/10.1007/s002200100548}{\textit{Comm. Math. Phys.}} \textbf{223} (2001), 383--408,
 \href{https://arxiv.org/abs/gr-qc/0012058}{arXiv:gr-qc/0012058}.

\bibitem{Godlinski-2008}
Godlinski M., Geometry of third-order ordinary differential equations and its
 applications in general relativity, \href{https://arxiv.org/abs/0810.2234}{arXiv:0810.2234}.

\bibitem{Godlinski-Nurowski-2009}
Godlinski M., Nurowski P., Geometry of third order {ODE}s, \href{https://arxiv.org/abs/0902.4129}{arXiv:0902.4129}.

\bibitem{Hill-Nurowski-2010}
Hill C.D., Nurowski P., Differential equations and para-{CR} structures,
 \textit{Boll. Unione Mat. Ital.} \textbf{3} (2010), 25--91,
 \href{https://arxiv.org/abs/0909.2458}{arXiv:0909.2458}.

\bibitem{Hitchin-1982}
Hitchin N.J., Complex manifolds and {E}instein's equations, in Twistor Geometry
 and Nonlinear Systems ({P}rimorsko, 1980), \textit{Lecture Notes in Math.},
 Vol.~970, \href{https://doi.org/10.1007/BFb0066025}{Springer}, Berlin~-- New York, 1982, 73--99.

\bibitem{Jones-Tod-1985}
Jones P.E., Tod K.P., Minitwistor spaces and {E}instein--{W}eyl spaces,
 \href{https://doi.org/10.1088/0264-9381/2/4/021}{\textit{Classical Quantum Gravity}} \textbf{2} (1985), 565--577.

\bibitem{LeBrun-Mason-2009}
LeBrun C., Mason L.J., The {E}instein--{W}eyl equations, scattering maps, and
 holomorphic disks, \href{https://doi.org/10.4310/MRL.2009.v16.n2.a7}{\textit{Math. Res. Lett.}} \textbf{16} (2009), 291--301,
 \href{https://arxiv.org/abs/0806.3761}{arXiv:0806.3761}.

\bibitem{Merker-2008}
Merker J., Lie symmetries and {CR} geometry, \href{https://doi.org/10.1007/s10958-008-9201-5}{\textit{J.~Math. Sci.}} \textbf{154} (2008), 817--922,
 \href{https://arxiv.org/abs/math.CV/0703130}{arXiv:math.CV/0703130}.

\bibitem{Merker-Nurowski-2020}
Merker J., Nurowski P., On degenerate para-{CR} structures: {C}artan reduction
 and homogeneous models, \href{https://arxiv.org/abs/2003.08166}{arXiv:2003.08166}.

\bibitem{Merker-Pocchiola-2018}
Merker J., Pocchiola S., Explicit absolute parallelism for 2-nondegenerate real
 hypersurfaces {$M^5 \subset {\mathbb C}^3$} of constant {L}evi rank~1,
 \href{https://doi.org/10.1007/s12220-018-9988-3}{\textit{J.~Geom. Anal.}} \textbf{30} (2020), 2689--2730.

\bibitem{Monge-1810}  Monge G.,
Sur les \'equations diff\'erentielles des courbes du second degr\'e,
\textit{Corr. l'\'Ecole Imp\'eriale Polytech.}
(1810), no.~2, 51--54;
\textit{Bull. Soc. Philomat. Paris}
(1810), 87--88.

\bibitem{Nurowski-2005}
Nurowski P., Differential equations and conformal structures, \href{https://doi.org/10.1016/j.geomphys.2004.11.006}{\textit{J.~Geom.
 Phys.}} \textbf{55} (2005), 19--49, \href{https://arxiv.org/abs/math.DG/0406400}{arXiv:math.DG/0406400}.

\bibitem{Pedersen-Tod-1993}
Pedersen H., Tod K.P., Three-dimensional {E}instein--{W}eyl geometry,
 \href{https://doi.org/10.1006/aima.1993.1002}{\textit{Adv. Math.}} \textbf{97} (1993), 74--109.

\bibitem{Tod-1992}
Tod K.P., Compact {$3$}-dimensional {E}instein--{W}eyl structures,
 \href{https://doi.org/10.1112/jlms/s2-45.2.341}{\textit{J.~London Math. Soc.}} \textbf{45} (1992), 341--351.

\bibitem{Tod-2000}
Tod K.P., Einstein--{W}eyl spaces and third-order differential equations,
 \href{https://doi.org/10.1063/1.533426}{\textit{J.~Math. Phys.}} \textbf{41} (2000), 5572--5581.

\bibitem{Weyl-1919}
Weyl H., {R}aum, {Z}eit, {M}aterie, \href{https://doi.org/10.1007/978-3-642-98950-6}{Springer-Verlag}, Berlin, 1919.

\bibitem{Wunschmann-1905}
W\"unschmann K., Uber Ber\"uhrungsbedingungen bei Integralkurven von
 Differentialgleichungen, \textit{Inaug. Dissert.}, Leipzig, Teubner, 1905.

\end{thebibliography}
\end{document}